\numberwithin{equation}{section}
\theoremstyle{plain}
\newtheorem{theorem}{Theorem}[section]
\newtheorem{proposition}[theorem]{Proposition}
\newtheorem{corollary}[theorem]{Corollary}
\newtheorem{lemma}[theorem]{Lemma}
\newtheorem{definition}[theorem]{Definition}
\theoremstyle{definition}
\newtheorem{remark}[theorem]{Remark}
\newtheorem{assumption}[theorem]{Assumption}
\begin{document}

\title{Area spectral rigidity for 
axially \\ symmetric symplectic billiard tables}

\author[1]{Luca Baracco\thanks{\texttt{baracco@math.unipd.it}}}
\author[1]{Olga Bernardi\thanks{\texttt{obern@math.unipd.it}}}
\author[2]{Alessandra Nardi\thanks{\texttt{alessandra.nardi@ist.ac.at}}}
\affil[1]{Dipartimento di Matematica Tullio Levi--Civita, Università di Padova, Italy.}
\affil[2]{IST Austria,
Klosterneuburg, Austria.}

\date{ }

\maketitle

\begin{abstract}
\noindent We prove that any finitely smooth axially symmetric strictly convex domain, with everywhere positive curvature and sufficiently close to an ellipse is area spectrally rigid. This means that any area-isospectral family of domains in this class is necessarily equi-affine. We use techniques, adapted to symplectic billiards, inspired to the paper by J. De Simoi, V. Kaloshin and Q. Wei \cite{DS}.
\end{abstract}

\section{Introduction}
Symplectic billiards were introduced by P. Albers and S. Tabachnikov in 2018 \cite{AlbTab}and were subsequently studied by various authors who investigated integrability \cite{BaBe}, \cite{BaBeNa2}, its Mather's $\beta$-function \cite{BaBeNa} and area spectral rigidity \cite{FSV}, \cite{DT}. \\ \\
\indent Let $\Omega$ be a strictly convex planar domain with fixed orientation. Through the whole paper we suppose that $\partial \Omega$ has everywhere positive curvature. Given $x,y,z \in \partial \Omega$, the symplectic billiard map $\Phi$ sends the chord $\Bar{xy}$ to the chord $\Bar{yz}$ if the segment $z-x$ is parallel to the tangent line in $y$. $\Phi$ turns out to be a twist map, preserving an area form, with generating function 
$$\omega(x, y) = \det(x,y).$$ Consequently, to every closed trajectory $\{x_j\}_{j = 0}^q$ of $\Phi$ in $\Omega$ ($x_0 = x_q$), it corresponds the action 
$\sum_{j = 0}^{q-1} \omega (x_j,x_{j+1})$. In particular, if the periodic trajectory winds once around $\partial \Omega$, then this action is the double of the area of the convex polygon inscribed in $\partial \Omega$ with vertices $\{x_j\}_{j = 0}^q$. We recall that $\Phi$ commutes with affine transformations of the plane. \\ \\
\indent In such a setting, we define the area spectrum for the symplectic billiard in $\Omega$ as the set of positive real numbers 
$$\mathcal{A}(\Omega)=\mathbb{N}\lbrace \text{action of all closed trajectories of $\Phi$}\rbrace\cup \mathbb{N}\lbrace A_\Omega\rbrace,$$ 
where $A_\Omega$ is the area of $\Omega$. We underline the difference of the area spectrum and the so-called marked area spectrum $\mathcal{MA}(\Omega)$, which is defined as the map that associates to any $1/q$ the maximal area of periodic trajectories having rotation number $1/q$. Marked area spectrum and Mather's $\beta$-function for the symplectic billiard dynamics are then related by
$$\beta(1/q):=-\frac{2}{q}\mathcal{MA}(\Omega)(1/q).$$
We remark that the sign minus in the above equality comes from the use of the generating function $-\omega(x,y)$ since the Mather’s $\beta$-function is defined by using minimal, instead of maximal,
trajectories. \\ \\
\indent S. Marvizi and R. Melrose’s theory of interpolating Hamiltonians \cite{MaMel} applied to the symplectic billiard map implies that $\Phi$ equals to the time-one flow of a Hamiltonian vector field composed
with a smooth map fixing pointwise the boundary of the phase-space at all orders. We refer also to \cite{Glu}[Section
2.1] for a detailed proof in the general case of (strongly) billiard-like maps. As an outcome, this result gives the following expansion at $0$ of the Mather's $\beta$ function:
$$\beta(1/q)\sim\frac{\beta_1}{q}+\frac{\beta_3}{q^3}+\frac{\beta_5}{q^5}+\frac{\beta_7}{q^7}+\dots$$
as $q\to\infty$. In such a setting, many questions corresponding to what it is known for Birkhoff billiards arise. \\ \\
\indent As suggested in \cite{AlbTab}[Remark 2.11], a first natural question is if there is a differential operator whose spectrum is precisely the sequence $\beta_1, \beta_3, \beta_5, \beta_7, \ldots$ For Birkhoff billiards, it is well-known that the corresponding length spectrum is closely related to the spectrum of the Laplace operator (see e.g. \cite{MaMel} and also \cite{PetSto}[Chapter 7]), but this problem is completely open for symplectic billiards. \\ \\
\noindent The coefficient $\beta_1$ is clearly $-2A_{\Omega}$. The two coefficients $\beta_3$ and $\beta_5$ were established in \cite{Lud} and the next one $\beta_7$ in \cite{BaBeNa}; in both cases, the techniques are from affine differential geometry and apply also to outer billiards. Consequently, another interesting question is whether it is possible to determine --maybe through a recursive formula-- the higher terms $\beta_9$, $\beta_{11}$... In the Birkhoff billiard case, this problem has been solved by A. Sorrentino \cite{Sor}. \\ \\
\noindent Another direction of study is the existence of area spectrally rigid classes of domains; this means that any family of domains in one of these classes with the same area spectrum is necessarily equi-affine. For Birkhoff billiards, the corresponding length spectral rigidity has been investigated by J. De Simoi, V. Kaloshin, and Q. Wei in \cite{DS}. In particular, they proved that the class of finitely smooth strictly convex axially symmetric domains, sufficiently close to a circle is length spectrally rigid, that is any
length-isospectral (or dynamically spectrally rigid) family of domains in this class is necessarily isometric. Their technique can be resumed in the next steps. First of all, they reduce any family of axially symmetric domains to a normalized one by using uniquely isometries. Secondly, for every $q \ge 2$, they prove the existence of marked locally maximizing axially symmetric $q$-periodic orbits of rotation number $1/q$. Then, if the given family is length-isospectral, to each orbit as above they associate a linear operator and rephrase the main result to the claim that this operator is injective. Finally, to prove the injectivity, they use precise estimates for a modification of the Lazutkin coordinates. \\ \\
\indent The intent of the present paper is to rephrase the result by De Simoi, Kaloshin and Wei for symplectic billiards. As explained above, in this setting length spectral rigidity is replaced by area spectral rigidity; moreover --since the symplectic billiard map commutes with affine transformations-- isometries are replaced by equi-affinities. Precise notions are contained in the next definition. \\ \\
\noindent \textbf{DEFINITION.} For $r \ge 2$, let $\mathcal{D}^r$ be the set of strictly convex domains with $C^{r+1}$ boundary and everywhere positive curvature. A  $C^1$ parametric family of domains $(\Omega_\tau)_{|\tau|\leq1} \subset \mathcal{D}^r$ is said to be 
\begin{itemize}
\item[$\bullet$] area-isospectral if $\mathcal{A}(\Omega_\tau)=\mathcal{A}(\Omega_{\tau'})$ for every $\tau,\tau'\in[-1,1];$
\item[$\bullet$] equi-affine if there exists a family $(\mathcal{B}_\tau)_{|\tau|\leq1}$ of affinities 
\begin{equation*}
            \mathcal{B}_\tau: \mathbb{R}^2 \to\mathbb{R}^2, \quad 
            x \mapsto B_\tau x+b_\tau
    \end{equation*} with $B_\tau\in SL (2, \mathbb{R})$ and $b_\tau\in\mathbb{R}^2$ such that 
    $$\Omega_\tau=\mathcal{B}_\tau\Omega_0 \qquad \forall \tau \in [-1,1].$$ 
\end{itemize}
\noindent The main result of the present paper is then given by the next theorem. 
\\ \\
\noindent   
\textbf{THEOREM.} Let $\mathcal{M}$ be the set of strictly convex domains with sufficiently (finitely) smooth boundary, everywhere positive curvature, axial symmetry, and sufficiently close to an ellipse. Let $(\Omega_{\tau})_{|\tau|\le 1}$ be a $C^1$ parameter family of domains in $\mathcal{M}$. If $(\Omega_{\tau})_{|\tau|\le 1}$ is area-isospectral
then $(\Omega_{\tau})_{|\tau|\le 1}$ is equi-affine. \\ \\
\noindent As explained for the seminal result in \cite{DS}, the proof of this theorem passes through the construction for every $\Omega_{\tau} \in \mathcal{M}$ of marked axially symmetric periodic orbits of rotation number $1/q$ ($q \ge 3$). Studying the variation of their areas in terms of $\tau$ results sufficient to obtain information on the family $(\Omega_{\tau})_{|\tau|\le 1}$ itself and then to prove the theorem when $\Omega_0$ is sufficiently close to an ellipse. Clearly --in general-- it is not always possible to construct a selected family of periodic orbits providing information on the variation of the spectrum. In light of the previous theorem, this further direction of investigation naturally arises. Does in general the marked area spectrum uniquely determine --up to affinities-- a domain? Or equivalently, does the Mather's $\beta$-function uniquely determine --up to affinities-- a domain? \\ \\
\noindent We underline that the recent paper \cite{FSV} by C. Fierobe, A. Sorrentino, and A. Vig presents closely related results, with an approach similar to the original one of De Simoi, Kaloshin and Wei. In both papers, spectral rigidity is addressed by showing that isospectrality implies the Fourier coefficients of the deformation function lie in the kernel of a certain operator. For domains close to a circle (in the Birkhoff case) and to an ellipse (in the symplectic case), this operator is injective, providing trivial deformations. Our approach is instead more direct, since we derive precise inequalities for the Fourier coefficients of the deformation function and conclude that, for domains sufficiently close to an ellipse, these inequalities imply zero Fourier coefficients. In particular, from the detailed proof of the theorem, it is clear both the necessity of the geometrical hypotheses on the family of domains and the role of the normalizations, which essentially compensate for the lack of periodic orbits of period smaller than $3$. \\ \\
\indent The paper is organized as follows. In Section \ref{S2} we define the area spectrum and area-isospectral and equi-affine families of domains. Moreover, we introduce the class of domains for which the area spectral rigidity holds. In Section \ref{S3} we first introduce the normalization for such a class of domains, which is essentially the same as the one proposed in \cite{DS}[Section 3]; clearly, by the affine equivariance of the symplectic billiard map, we reduce any family to a normalized one by using uniquely affinities. Finally, we define the infinitesimal deformation function for the normalized family of domains and we rephrase the area isospectrality in terms of a linear system involving the deformation function. In Section \ref{S5} we recall some preliminaries in affine differential geometry and we prove a technical lemma --refinement of Proposition 3.3 in \cite{BaBeNa}-- which will be fundamental in the proof of the theorem. In fact, our proofs are performed by using the affine parametrization. In particular, if $\{ s_j\}_{j=0}^q$ are the ordered affine parameters corresponding to a simple periodic trajectory for the symplectic billiard, we use --as in \cite{BaBeNa}-- the Taylor expansions of both $s_j$ and of $\lambda_j := s_j - s_{j-1}$. Section \ref{sezione finale} is devoted to the detailed proof of the main theorem. \\ \\
\indent The authors would like to express sincere thanks for the anonymous referee whose comments and suggestions helped improve and clarify the manuscript.

\section{Preliminaries and statement of the result}\label{S2}
Let $\Omega$ be a strictly convex planar domain with smooth boundary $\partial \Omega$ and fixed positive counterclockwise orientation. We assume that $\Omega$ contains the origin. By the strict convexity, for every point $x \in \partial \Omega$ there exists a unique (opposite) point $x^*$ such that $T_x \partial \Omega = T_{x^*} \partial \Omega$. We refer to
$$\mathcal{P} := \{(x, y) \in \partial \Omega \times \partial \Omega : x < y < x^*\}$$
as the (open, positive) phase-space and we define the symplectic billiard map as follows (see \cite{AlbTab}[Page 5]):
$$\Phi : \mathcal{P} \to \mathcal{P}, \quad (x, y) \mapsto (y, z)$$
where $z$ is the unique point satisfying 
$$z-x \in T_y \partial \Omega.$$
We notice that $\Phi$ is continuous and can be continuously extended to $\bar{\mathcal{P}}$, so that $\Phi(x,x) = (x,x)$ and $\Phi(x,x^*) = (x^*,x)$ (these are the unique $2$-periodic orbits of $\Phi$). Moreover, see \cite{AlbTab}[Section 2] for exhaustive details, the symplectic billiard map turns out to be a twist map, preserving an area form, with generating function 
$$\omega(x, y) = \det(x,y)$$ 
so that
$$\Phi(x,y) = (y,z) \Leftrightarrow \frac{d}{dy} \left[ \omega(x,y) + \omega(y,z) \right] = 0.$$
We refer also to \cite{BaBe}, \cite{BaBeNa}, and \cite{BaBeNa2} for recent advances in symplectic billiards. \\
\noindent To every closed trajectory $\{x_j\}_{j = 0}^q$ of $\Phi$ in $\Omega$ ($x_0 = x_q$), it corresponds the action $\sum_{j = 0}^{q-1} \omega (x_j,x_{j+1})$. In particular, if the periodic trajectory winds once around $\partial \Omega$, then $\frac{1}{2} \sum_{j = 0}^{q-1} \omega (x_j,x_{j+1})$ is the area of the convex polygon inscribed in $\partial \Omega$ with vertices $\{x_j\}_{j = 0}^q$.  
\begin{definition} 
The area spectrum for the symplectic billiard in $\Omega$ is the set of positive real numbers 
$$\mathcal{A}(\Omega)=\mathbb{N}\lbrace \text{action of all closed trajectories of $\Phi$}\rbrace\cup \mathbb{N}\lbrace A_\Omega\rbrace,$$ 
where $A_\Omega$ is the area of $\Omega$.
\end{definition}
\noindent We underline the difference between the area spectrum and the so-called marked area spectrum, which is defined as the map that associates to any $1/q$, $q \ge 3$, the maximal area of (simple) convex polygons with $q$ vertices inscribed in $\partial \Omega$. See e.g. \cite{AlbTab}[Beginning of Section 2.5.2] and \cite{BaBeNa}[Definition 1.4]. We stress that the substantial difference of the two spectra is that the marked area spectrum preserves the information on the rotation number. \\
~\newline
For $r \ge 2$, let $\mathcal{D}^r$ be the set of strictly convex domains with $C^{r+1}$ boundary, {\textit{with everywhere positive curvature}}. The proof of this property of the area spectrum is the same as the one in \cite{DS}[Lemma 4.1].
\begin{lemma} \label{lemma Sard} Let $r \ge 2$. For any domain $\Omega\in\mathcal{D}^r$, $\mathcal{A}(\Omega)$ has zero Lebesgue measure.
\end{lemma}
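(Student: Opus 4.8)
The plan is to reduce the statement to the single assertion that the set
$$\mathcal{S}(\Omega):=\{\text{action of all closed trajectories of }\Phi\}$$
has zero Lebesgue measure. Indeed, $\mathbb{N}\{A_\Omega\}$ is countable, and if $\mathcal{S}(\Omega)$ is null then so is each rescaled copy $n\,\mathcal{S}(\Omega)$ for $n\in\mathbb{N}$, whence $\mathbb{N}\{\mathcal{S}(\Omega)\}=\bigcup_{n}n\,\mathcal{S}(\Omega)$ is a countable union of null sets, hence null; since $\mathcal{A}(\Omega)$ is the union of $\mathbb{N}\{\mathcal{S}(\Omega)\}$ and $\mathbb{N}\{A_\Omega\}$, it is null as well. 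Writing $\mathcal{S}(\Omega)=\bigcup_{q\ge 2}\mathcal{S}_q(\Omega)$, where $\mathcal{S}_q(\Omega)$ collects the actions of the $q$-periodic orbits, it then suffices to prove that each $\mathcal{S}_q(\Omega)$ is null.

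A first attempt is to realize the $q$-periodic orbits of a fixed rotation number as critical points of the action $\sum_{j}\omega(x_j,x_{j+1})$ on the configuration torus $\mathbb{T}^q$ and to invoke Sard's theorem, so that the actions, being critical values, form a null set. The obstacle is the finite smoothness: with $\partial\Omega$ of class $C^{r+1}$ the action on $\mathbb{T}^q$ is only $C^{r+1}$, while Sard's theorem for a map $\mathbb{T}^q\to\mathbb{R}$ requires class $C^{q}$. Thus this naive argument controls only the finitely many periods $q\le r+1$, and the real difficulty is to treat all (in particular large) $q$ with a fixed amount of smoothness.

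To bypass this, I would exploit the twist structure to keep the ambient dimension equal to two regardless of $q$. Since $\partial\Omega\in C^{r+1}$ the map $\Phi$ is $C^{r}$ on the two-dimensional phase space $\mathcal{P}$, and the $q$-periodic orbits are exactly the points of $\mathrm{Fix}(\Phi^q)=\{w\in\mathcal{P}:\Phi^q(w)=w\}$. On $\mathcal{P}$ define the $C^r$ action function $\tilde A_q(w)=\sum_{j=0}^{q-1}\omega\bigl(\pi_1\Phi^j(w),\pi_1\Phi^{j+1}(w)\bigr)$, which on $\mathrm{Fix}(\Phi^q)$ returns the action of the corresponding closed trajectory, so that $\mathcal{S}_q(\Omega)=\tilde A_q\bigl(\mathrm{Fix}(\Phi^q)\bigr)$. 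The key point is that the action is infinitesimally constant along families of periodic orbits: differentiating $\sum_j\omega(x_j,x_{j+1})$ along a deformation and using that every point of an orbit satisfies the critical (orbit) condition $\partial_2\omega(x_{j-1},x_j)+\partial_1\omega(x_j,x_{j+1})=0$, all interior terms cancel and, by periodicity, so do the boundary ones; hence $d\tilde A_q$ annihilates $\ker\bigl(D\Phi^q-\mathrm{Id}\bigr)$ at every point of $\mathrm{Fix}(\Phi^q)$.

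It then remains to stratify $\mathrm{Fix}(\Phi^q)$ by the rank of $D\Phi^q-\mathrm{Id}$ and conclude by Sard's theorem in dimension at most two. Where the rank is $2$ the periodic points are isolated by the implicit function theorem, hence countable, with countable image. Where the rank is $1$ the set $\mathrm{Fix}(\Phi^q)$ lies locally in a $C^r$ curve whose tangent coincides with $\ker\bigl(D\Phi^q-\mathrm{Id}\bigr)$, so that the restriction of $\tilde A_q$ to this curve is a $C^1$ function of one variable with vanishing derivative at the periodic points, and the one-dimensional Sard theorem (for which $C^1$ suffices) makes its image null. Where the rank is $0$ the full differential $d\tilde A_q$ vanishes, so these points lie in the critical set of $\tilde A_q:\mathcal{P}\to\mathbb{R}$ and the two-dimensional Sard theorem, which requires exactly class $C^2$, yields a null image; this is precisely where the hypothesis $r\ge 2$ is used. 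Covering $\mathcal{P}$ by countably many charts and taking the countable union over $q$ completes the argument. The main obstacle, as anticipated, is the uniform control over all periods together with the possible non-smoothness of $\mathrm{Fix}(\Phi^q)$, both resolved by the phase-space reduction and the rank stratification; the delicate points are verifying the local-constancy identity and the coincidence of tangent spaces in the rank-one stratum.
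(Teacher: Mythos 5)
Your proof is correct, but it takes a genuinely different --- and in fact more careful --- route than the paper's. The paper's own proof is exactly the ``naive argument'' you describe and then discard: it defines $\omega_q$ on the $q$-dimensional configuration space of ordered $q$-tuples, observes that periodic orbits are its critical points, and invokes Sard's theorem citing only the hypothesis $r\ge 2$. As you correctly point out, Sard's theorem for a real-valued function of $q$ variables requires $C^q$ regularity (and this threshold is essentially sharp: there are $C^{q-1}$ functions on $\mathbb{R}^q$ whose critical values contain an interval), so for fixed finite $r$ the configuration-space argument as written only covers the finitely many periods $q\le r+1$ and leaves all large periods unjustified. Your phase-space reduction repairs this: by realizing the $q$-periodic orbits as $\mathrm{Fix}(\Phi^q)$ in the two-dimensional phase space, checking that $d\tilde A_q$ annihilates $\ker\bigl(D\Phi^q-\mathrm{Id}\bigr)$ there (the telescoping of the orbit conditions you describe is correct), and stratifying by the rank of $D\Phi^q-\mathrm{Id}$, you only ever apply Sard in dimension at most two, where $C^2$ --- i.e.\ $r\ge 2$ --- suffices uniformly in $q$. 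The rank-one stratum is handled properly: the local curve is the zero set of a component of $\Phi^q-\mathrm{Id}$ with nonvanishing differential, and its tangent at a fixed point equals the one-dimensional kernel by a dimension count, so the restricted function is critical there and one-dimensional $C^1$ Sard applies. A minor bonus of your approach is that $\mathrm{Fix}(\Phi^q)$ captures periodic orbits of every rotation number $p/q$, whereas the paper's ordered-tuple functional only sees rotation number $1/q$ and would need a further (countable, hence harmless) union over $p$. In short, your argument proves the lemma at the stated regularity, while the paper's, as literally written, has a gap for periods $q>r+1$ that your stratification closes.
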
 
\noindent Given $\Omega \in \mathcal{D}^r$, we assume that the affine perimeter of $\partial \Omega$ is normalized to $1$ and we indicate by
$$\mathbb{T} \ni s \mapsto \gamma_{\Omega}(s) \in \partial \Omega$$ 
the $C^{r}$ affine arc-length parametrization of $\partial \Omega$, where $\mathbb{T} = \mathbb{R}/\mathbb{Z} = [0,1] / \sim$ identifying $0 \sim 1$. \\
\noindent In particular, for $\Omega_0,\Omega_1 \in \mathcal{D}^r$, let 
\begin{equation*}
    d(\Omega_0,\Omega_1) := \sum_{k=0}^{r}\sup_{s\in[0,1]}\|\gamma_{\Omega_1}^{(k)}(s)-\gamma_{\Omega_0}^{(k)}(s)\|.
\end{equation*}
The next definition corresponds to Definition 2.9 in \cite{DS}. 
\begin{definition} \label{dist cerchio} Given $\Omega\in\mathcal{D}^r$ with unitary affine perimeter, let $D$ be the disk of affine perimeter $1$ tangent to $\Omega$ at the point $\gamma_{\Omega}(0)$. For $\delta>0$, $\Omega$ is said to be $\delta$-close to the circle if 
\begin{equation} \label{delta}
d(\Omega,D)\leq\delta.
\end{equation}
A domain $\Omega\in\mathcal{D}^r$ of arbitrary affine perimeter is said to be $\delta$-close to the circle if its rescaling of unitary affine perimeter satisfies (\ref{delta}). 
\end{definition}
\begin{assumption} \label{no hp} From now on, every domain $\Omega \in \mathcal{D}^r$ is assumed to be: 
\begin{itemize}
\item[$(a)$] parametrized by the affine parameter; 
\item[$(b)$] normalized to unitary affine perimeter. 
\end{itemize}
\end{assumption}
\noindent Given a $C^1$ parametric family of domains $(\Omega_\tau)_{|\tau|\leq1}\subset \mathcal{D}^r$, we use the notation
$$\gamma:[-1,1]\times\mathbb{T} \to\mathbb{R}^2, \quad (\tau,s) \mapsto \gamma(\tau, s) := \gamma_{\Omega_{\tau}}(s).$$
By hypothesis, $\gamma(\cdot,s)$ is $C^1$ for every $s \in \mathbb{T} $ and $\gamma(\tau,\cdot)$ is $C^{r}$ for every $\tau\in[-1,1]$. \\
\noindent Notice that $\gamma$ and $\Tilde{\gamma}$ parametrize the same family of domains $(\Omega_\tau)_{|\tau|\leq1} \subset \mathcal{D}^r$ if and only if there exists a $C^1$ family of $C^{r}$ circle diffeomorphisms $\tilde{s}:[-1,1]\times\mathbb{T} \to\mathbb{T}$ such that 
\begin{equation} \label{equivalent families}
\gamma(\tau,s)=\tilde{\gamma}(\tau,\tilde{s}(\tau,s) = s + s_0(\tau)) 
\end{equation} 
(or equivalently, $\tilde{\gamma}(\tau,\tilde{s}) = \gamma(\tau,s(\tau,\tilde{s}))$, where $s$ denotes the inverse of $\tilde{s}$). Two parametrizations corresponding to the same family of domains are said to be equivalent. \\
\noindent Let keep in mind the definition of area-isospectral and equi-affine families of domains given in the Introduction and fix a subset $\mathcal{M} \subset \mathcal{D}^r$. 
\begin{definition}
A domain $\Omega \in \mathcal{M}$ is called area spectrally rigid in $\mathcal{M}$ if any area-isospectral family of domains $(\Omega_\tau)_{|\tau|\leq1}\subset\mathcal{M}$ with $\Omega_0=\Omega$ is necessarily equi-affine.
\end{definition}
\begin{remark} The aim of the present paper is to construct a class of domains for which the area spectral rigidity holds. As a consequence, we underline that Assumption \ref{no hp} is not restrictive because if $(\Omega_{\tau})_{|\tau| \le 1} \subset \mathcal{D}^r$ is area-isospectral then the affine perimeter of $\partial \Omega_{\tau}$ is constant. In fact, under the hypothesis of area-isospectrality, the marked area spectrum for the symplectic billiard in $\Omega_{\tau}$ is independent on $\tau$. Equivalently, the corresponding formal Taylor expansion at $0$ of the Mather's $\beta$-function is independent on $\tau$. To conclude, it is then sufficient to remind that the term of order $3$ is exactly $\lambda^3/6$ (see e.g. \cite{Lud}[Theorem 1]), where $\lambda$ is the affine perimeter of the symplectic billiard table.
\end{remark}

\noindent In order to state the main result, we start by recalling that a domain $\Omega\in\mathcal{D}^r$ is said to be \textit{axially symmetric} if there exists a line $\Delta\subset\mathbb{R}^2$ such that $\Omega$ is invariant under the reflection along $\Delta$. By convexity, $\Delta \cap \partial \Omega$ is given by two points. Chosen (arbitrarily) one of these points, we refer to it as the marked point of $\partial \Omega$ and to the other as the auxiliary point of $\partial \Omega$. \\
The next lemma --which is a consequence of the axial symmetry of $\Omega$-- corresponds to Lemma 4.3 in \cite{DS}. 
\begin{lemma} \label{max axially}
Let $\Omega\in\mathcal{D}^r$ be an axially symmetric domain. For every $q\geq3$ there exists an axially symmetric periodic orbit of rotation number $1/q$ passing through the marked point of $\partial\Omega$ and of maximal area among all axially symmetric $q$-periodic orbits passing through the marked point. 
\end{lemma}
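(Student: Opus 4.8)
The plan is to realize the desired orbit as the maximizer of the (doubled) area functional $\omega_q$ restricted to the finite-dimensional family of axially symmetric inscribed $q$-gons having a vertex at the marked point, and then to use the symmetry to upgrade this constrained criticality to the full symplectic billiard reflection law. Normalize so that the marked point is $\gamma(0)$ and the reflection $R$ along $\Delta$ acts on the affine parameter by $s \mapsto -s$; since $R$ preserves the (equi-affine) affine arc-length and interchanges the two boundary arcs joining the two fixed points of $R$, the auxiliary point is $\gamma(1/2)$. A symmetric configuration through the marked point is then determined by its vertices in $(0,1/2)$. If $q = 2k+1$ is odd, I parametrize such configurations by $0 < t_1 < \dots < t_k < 1/2$, with vertex set $\{0, t_1, \dots, t_k, 1-t_k, \dots, 1-t_1\}$; if $q = 2k+2$ is even, the auxiliary point is forced to be a vertex and the vertex set is $\{0, t_1, \dots, t_k, 1/2, 1-t_k, \dots, 1-t_1\}$. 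In both cases the parameter domain is the closed simplex $\bar\Sigma = \{0 \le t_1 \le \dots \le t_k \le 1/2\}$, and by construction every interior point gives a simple inscribed $q$-gon winding once around $\partial\Omega$, i.e. of rotation number $1/q$.

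First I would note that $A(t_1,\dots,t_k) := \tfrac12 \omega_q$ is continuous on the compact set $\bar\Sigma$, so a maximizer exists. The key point is that it lies in the interior: on $\partial\bar\Sigma$ two of the prescribed vertices collide (either $t_i = t_{i+1}$, or $t_1 = 0$, or $t_k = 1/2$ in the odd case), so the polygon degenerates to one with fewer distinct vertices. Because $\partial\Omega$ has everywhere positive curvature it is strictly convex, and separating a collided symmetric pair of vertices along the boundary strictly increases the enclosed area; hence no boundary configuration can be maximal and the maximizer $(t_1^\ast,\dots,t_k^\ast)$ is an interior, non-degenerate symmetric $q$-gon.

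It then remains to check that this maximizer is a genuine $q$-periodic orbit, i.e. that the reflection law $x_{j+1} - x_{j-1} \in T_{x_j}\partial\Omega$ holds at every vertex; recall that such configurations are exactly the critical points of $\omega_q$ over boundary-constrained tuples. At an interior maximizer $\partial_{t_i}\omega_q = 0$ for each free parameter. Writing $g_j := \det\!\big(x_{j+1} - x_{j-1},\, \gamma'(t_j)\big)$ for the reflection residual at the $j$-th vertex (so $g_j = 0$ is precisely the law there), a direct computation using $\gamma(1-t_i) = R\gamma(t_i)$, $\gamma'(1-t_i) = -R\gamma'(t_i)$ and $\det(Ru, Rv) = -\det(u,v)$ shows that moving $t_i$ displaces simultaneously the vertex $\gamma(t_i)$ and its mirror $\gamma(1-t_i)$ and yields $\partial_{t_i}\omega_q = -2\, g_i$; thus $g_i = 0$, and by symmetry the residual at the mirror vertex vanishes as well. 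At the axis vertices the law then holds automatically: at $\gamma(0)$ the two neighbours are a mirror pair $x_1,\, Rx_1$, and both $x_1 - Rx_1$ and $\gamma'(0)$ lie in the one-dimensional $(-1)$-eigenspace of $R$, hence are parallel and $g_0 = 0$; the same argument applies at $\gamma(1/2)$ in the even case. This makes the maximizer an axially symmetric $q$-periodic orbit of rotation number $1/q$ through the marked point, maximal by construction.

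I expect the main obstacle to be the non-degeneracy step — ruling out that the constrained maximum is attained at a collapsed configuration — together with the bookkeeping that identifies the restricted first-order condition with the per-vertex reflection law (the clean relation $\partial_{t_i}\omega_q = -2\, g_i$), since there the simultaneous motion of a vertex and its mirror must be tracked carefully and the automatic vanishing of the residual at the axis vertices must be deduced from the symmetry of $\Omega$.
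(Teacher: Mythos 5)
Your proposal is correct and follows essentially the same route as the paper: both fix the marked point (and, for even $q$, the forced auxiliary vertex), maximize the area functional over the compact simplex of symmetric half-configurations, rule out boundary (collapsed) maximizers by strict convexity, and read off the reflection law at the free vertices from the first-order conditions, with the law at the axis vertices following from the symmetry. The only cosmetic difference is that the paper maximizes the half-sum of $\omega$ over half the polygon (which equals $\tfrac12\omega_q$ by symmetry) rather than the full $\omega_q$, so the two optimizations coincide.
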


\begin{proof} We recall that $\Omega$ is parametrized in affine arc-length by $\gamma(s)$; moreover, since $\lambda=1$, the marked and the auxiliary points can be conventionally fixed to be $\gamma(0)$ and $\gamma(1/2)$ respectively. We distinguish even and odd period and we use the same arguments as in the proof of Lemma 4.3 in \cite{DS}. \\
\indent For $k\geq2$, let $q=2k$ even. In such a case, the desired $q$-periodic orbit passes through the marked and the auxiliary points. In fact, once fixed $s_0=0$ and $s_k=1/2$, we look for $(s_1,\dots,s_{k-1})$ in the compact set $0=s_0\leq s_1\leq\dots\leq s_{k-1}\leq s_k=1/2$, maximizing 
\begin{equation}\label{even}
\sum_{j=0}^{k-1}\omega(\gamma(s_j),\gamma(s_{j+1})).
\end{equation}
The maximum $(0,\Bar{s}_1,\dots,\Bar{s}_{k-1},1/2)$ must satisfy $0<\Bar{s}_1<\dots<\Bar{s}_{k-1}<1/2$. Moreover,
$$0 = \partial_1 \omega(\gamma(\Bar{s}_j),\gamma(\Bar{s}_{j+1})) + \partial_2 \omega(\gamma(\Bar{s}_{j-1}),\gamma(\Bar{s}_j)) \qquad \forall j=1,\dots,k-1.$$ 
Defining $\Bar{s}_{2k-j}=-\Bar{s}_j$ for every $j=1,\dots k-1$, we obtain the desired $2k$-periodic orbit. See $(a)$ in Figure \ref{figura 1}. \\
\indent For $k\geq1$, let $q=2k+1$ odd. In such a case, differently from above, we fix only $s_0=0$ and we look for $(s_1,\dots,s_k)$ in the compact set $0=s_0\leq s_1\leq\dots\leq s_k \le 1/2$, maximizing 
\begin{equation}\label{odd} 
\sum_{j=0}^{k-1}\omega(\gamma(s_j),\gamma(s_{j+1}))+\frac{1}{2}\omega(\gamma(s_k),\gamma(-s_{k})).
\end{equation} Again, the maximum $(0,\bar{s}_1,\dots,\bar{s}_k)$ must satisfy $0 <\Bar{s}_1<\dots<\Bar{s}_{k-1}<\Bar{s}_k<1/2$. Moreover
\begin{equation*}
        \begin{split}
            0&=\partial_1 \omega(\gamma(\Bar{s}_j),\gamma(\Bar{s}_{j+1}))+\partial_2 \omega(\gamma(\Bar{s}_{j-1}),\gamma(\Bar{s}_j)) \qquad \forall j=1,\dots,k-1, \\
            0&=\partial_2 \omega(\gamma(\Bar{s}_{k-1}),\gamma(\Bar{s}_{k}))+\partial_1 \omega(\gamma(\Bar{s}_{k}),\gamma(-\Bar{s}_{k})).
        \end{split}
\end{equation*}
The $(2k+1)$-periodic orbit of the statement is then obtained by defining $\Bar{s}_{2k+1-j}=-\Bar{s}_j$ for every $j=1,\dots k-1$. See $(b)$ in Figure \ref{figura 1}.
\end{proof}
\begin{figure}[H]
\centering
    \begin{subfigure}[]
        \centering
        \includegraphics[scale=0.25]{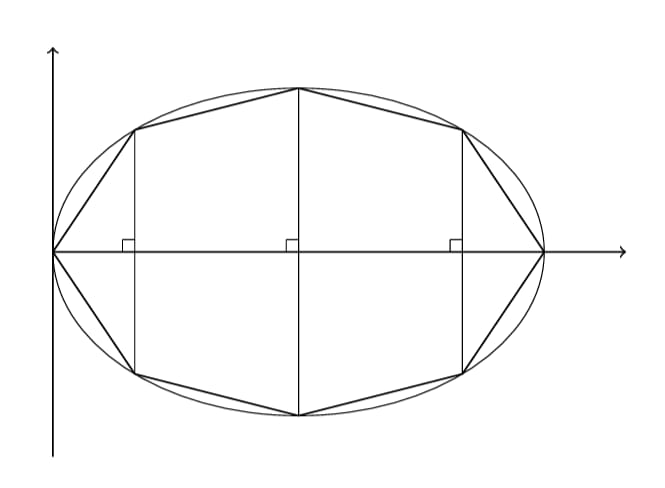}
    \end{subfigure}
    \begin{subfigure}[]
        \centering
        \includegraphics[scale=0.25]{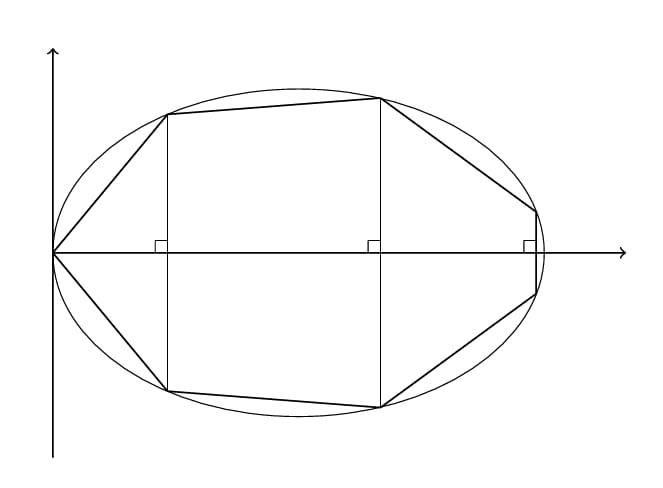}
    \end{subfigure}
    \caption{Axially symmetric periodic orbits of even period $(a)$ and odd period $(b)$.}
    \label{figura 1}
\end{figure}
\noindent 
\noindent In the sequel, we denote by $\mathcal{AS}^r_{\delta}$ the set of axially symmetric domains in $\mathcal{D}^r$ which are $\delta$-close to the circle, in accord with Definition \ref{dist cerchio}.

\section{Normalization and infinitesimal deformation function}\label{S3}

We now introduce the normalization. Given a family $(\Omega_{\tau})_{|\tau| \le 1} \subset \mathcal{D}^r$ of axially symmetric domains, parametrized by $\gamma$, the associated normalized family $(\tilde{\Omega}_{\tau})_{|\tau| \le 1}$ is then constructed as follows:
\begin{enumerate}
    \item[$(a)$] By a translation, impose the marked point to be $\gamma(\tau,0) = (0,0)$ for every $\tau \in [-1,1]$;
    \item[$(b)$]  by a rotation, impose the axis of symmetry of every $\Omega_{\tau}$ to be on $x>0$; 
    \item[$(c)$]  finally, by a unitary diagonal transformation, impose the auxiliary point of every $\partial \Omega_{\tau}$ to be $\left(2 (2\pi)^{-3/2},0\right)$.
\end{enumerate} 
\noindent We underline that all transformations used in points $(a)$--$(c)$ of the normalization do not change the (unitary) affine perimeter, and that circles and ellipses coincide with this normalization. Moreover, given a family $(\Omega_{\tau})_{|\tau| \le 1} \subset \mathcal{D}^r$ of axially symmetric domains, we have used affinities to construct $(\tilde{\Omega}_{\tau})_{|\tau| \le 1}$. As a consequence, $\mathcal{A}(\Omega_\tau) =\mathcal{A}(\Tilde{\Omega}_\tau)$ for every $\tau \in [-1,1]$ and therefore $(\Omega_\tau)_{|\tau|\leq1}$ is area isospectral if and only if $(\Tilde{\Omega}_\tau)_{|\tau|\leq1}$ is area isospectral. Consequently, the Theorem stated in the Introduction can be rephrased as follows. From now on --when it is clear from the context-- we omit the tilde to indicate a normalized family.
\begin{theorem}\label{THMC} Let $r=6$. There exists $\delta>0$ such that any domain $\Omega\in\mathcal{AS}^r_\delta$ is area spectrally rigid in $\mathcal{AS}^r_\delta$. In other words, any normalized area-isospectral family of domains $(\Omega_\tau)_{|\tau|\leq1}\subset \mathcal{AS}^r_\delta$ with $\Omega_0 = \Omega$ is necessarily constant.
\end{theorem}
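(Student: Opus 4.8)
The plan is to show that the only infinitesimal deformation compatible with area-isospectrality is the trivial one, and then to integrate this to constancy. Write $\gamma=\gamma(\tau,s)$ for the normalized family in affine arc-length parametrization, so that $\det(\gamma',\gamma'')\equiv 1$ with $'=\partial_s$, and decompose the infinitesimal variation of the boundary along the equiaffine moving frame,
$$\partial_\tau\gamma(\tau,s)=a(\tau,s)\,\gamma'(\tau,s)+\mu(\tau,s)\,\gamma''(\tau,s),$$
calling $\mu$ the infinitesimal deformation function. The tangential coefficient $a$ merely reparametrizes each curve, so the whole geometric content is carried by $\mu$: if $\mu\equiv 0$ then $\partial_\tau\gamma$ is everywhere tangent to $\partial\Omega_\tau$, the image $\gamma(\tau,\mathbb{T})$ is independent of $\tau$, and the normalization of Section~\ref{S3} (pinning down the marked point, the axis, and the auxiliary point) forces the family to be constant. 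Thus Theorem~\ref{THMC} reduces to proving $\mu\equiv 0$. Since each $\Omega_\tau$ is symmetric with respect to the $x$-axis, one has $\gamma(\tau,-s)=R\gamma(\tau,s)$ with $R(x,y)=(x,-y)$; differentiating this relation in $\tau$ and in $s$ shows that $a$ is odd while $\mu(\tau,-s)=\mu(\tau,s)$ is even, so it suffices to recover the even (cosine) Fourier modes of $\mu$.

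Next I would produce the linear constraints on $\mu$ coming from the spectrum. For every $q\geq 3$ the domain admits, by the twist property and the closeness to the ellipse (as in the construction of \cite{DS}), a symmetric maximizing $q$-periodic orbit of rotation number $1/q$, with vertices $\gamma(\tau,s_j(\tau))$ depending smoothly on $\tau$; its action is twice the maximal inscribed-$q$-gon area, hence, by area-isospectrality together with the Remark on the marked area spectrum, constant in $\tau$. Differentiating $A(\tau)=\sum_j\omega(\gamma_j,\gamma_{j+1})$ and using that the $s_j(\tau)$ are critical points of $\omega_q$ (so only the explicit $\tau$-dependence survives, by the envelope theorem) yields $A'(\tau)=\sum_j\det(\partial_\tau\gamma_j,\,\gamma_{j+1}-\gamma_{j-1})$. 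The defining relation of the symplectic billiard gives $\gamma_{j+1}-\gamma_{j-1}=\ell_j\,\gamma'_j$ for some $\ell_j>0$, and because $\det(\gamma'_j,\gamma''_j)=1$ the tangential part drops out, leaving
$$0=A'(\tau)=-\sum_{j=0}^{q-1}\ell_j(\tau)\,\mu(\tau,s_j(\tau)).$$
Area-isospectrality is therefore equivalent to the vanishing, for every $q$, of the weighted sums $\sum_j\ell_j\,\mu(s_j)$, and the task becomes the injectivity of this linear operator on even functions.

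The injectivity is transparent in the model case of the ellipse, the affine analogue of the circle in \cite{DS}: there the symmetric orbit sits at equally spaced affine parameters $s_j=j/q$ with constant weights $\ell_j\equiv\ell_q$, so each constraint reads $\sum_{j}\mu(j/q)=0$, i.e.\ $\hat\mu(0)+2\sum_{m\geq 1}\hat\mu(mq)=0$ for all $q$; letting $q\to\infty$ (using the decay of the Fourier coefficients afforded by finite smoothness) gives $\hat\mu(0)=0$, and a descending Möbius-type induction over $q$ then kills every mode, so $\mu\equiv 0$. For a general $\Omega\in\mathcal{AS}^r_\delta$ I would treat the system as a perturbation of this exact one. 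Using the Marvizi–Melrose expansion of the $\beta$-function and the technical refinement of Proposition~3.3 of \cite{BaBeNa} announced in Section~\ref{S5}, one obtains precise asymptotic expansions as $q\to\infty$ of the orbit parameters $s_j$ and of the increments $\lambda_j=s_j-s_{j-1}$ (hence of $\ell_j$), with leading order reproducing the equispaced-constant picture and corrections of size $O(\delta)$ and controlled order in $1/q$. Replacing the discrete sums $\sum_j\ell_j\mu(s_j)$ by the corresponding integrals plus these controlled errors recasts the constraints as an almost-diagonal (triangular) system on the cosine coefficients of $\mu$.

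The main obstacle is precisely this last point: proving that, for $\delta$ small, the perturbed operator stays injective uniformly in $q$. This is the heart of the argument, where the quantitative estimates are indispensable, since one must dominate the off-diagonal couplings generated by the deviation of $\{s_j\}$ from equispacing and of $\{\ell_j\}$ from constancy by the diagonal terms inherited from the elliptic model, uniformly over the infinitely many constraints indexed by $q$. It is here that the choice $r=6$ (providing enough affine smoothness to justify the required order of the Taylor/Lazutkin-type expansions) and the evenness of $\mu$ (restricting attention exactly to the modes probed by the symmetric orbits) enter. Granting this injectivity one concludes $\mu\equiv 0$, and hence, as explained above, that the normalized family is constant.
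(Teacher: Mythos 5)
Your outline follows the same route as the paper: reduce constancy to the vanishing of an infinitesimal deformation function, extract linear constraints by differentiating the action of the symmetric maximizing $q$-periodic orbits of Lemma \ref{max axially}, and treat the resulting system as a perturbation of the exactly solvable circular/elliptic model. Your choice of deformation function is equivalent to the paper's: decomposing $\partial_\tau\gamma=a\gamma'+\mu\gamma''$ in the affine frame gives $\mu=-\rho^{1/3}n_\gamma$, which is (up to sign) precisely the function $u$ that the paper ends up estimating, so your weighted sums $\sum_j\ell_j\mu(s_j)$ coincide with the paper's $\sum_j\|\gamma(\bar s_{j+1})-\gamma(\bar s_{j-1})\|\,n_\gamma(\bar s_j)$ in Proposition \ref{subdiff}. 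One local slip: you justify the constancy in $\tau$ of the action $A(\tau)$ of the symmetric maximizing orbit by appeal to the marked area spectrum, but that orbit maximizes only among \emph{axially symmetric} $q$-gons through the marked point and need not realize the global maximum, so the marked spectrum does not directly control it. The correct (and needed) argument is the one in the paper: $\Delta_q(\tau)$ lies in the area spectrum, which has zero Lebesgue measure by Lemma \ref{lemma Sard}, and $\Delta_q$ is continuous, hence constant under isospectrality.

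The genuine gap is that the central quantitative step is not carried out but only named. You write that the injectivity of the perturbed operator ``is the heart of the argument'' and then conclude ``granting this injectivity.'' That grant is where essentially all of the paper's work lies: the refined expansions of $\lambda_j$ and $s_j$ in Section \ref{S5} (sharpening Proposition 3.3 of \cite{BaBeNa} to order $q^{-4}$ with controlled remainders, which is exactly why $r=6$ is needed), the expansion of the weights $\|\gamma(s_{j+1})-\gamma(s_{j-1})\|$, and above all Lemma \ref{liposuzione}. In particular, two points that your ``almost-diagonal system'' picture glosses over are indispensable: (i) the naive system (\ref{ordine}) carries an anomalous right-hand-side term of size $O(1/q^2)$, larger than the $O(1/q^4)$ off-diagonal couplings, and one must first argue that this term vanishes identically (by comparing orders in $1/q$) before the contraction estimate $|\hat u_q|\le C\|u\|_4(1+\delta)q^{-4}$ with $C(1+\delta)<1$ can close; and (ii) the constraints only reach the modes $\hat u_q$ for $q\ge3$, so the modes $k=1,2$ must be recovered separately from the normalization conditions $u(0)=0$ and $u(0)+u(1/2)=0$. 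Without these the proof is an (accurate) plan rather than a proof.
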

\noindent Given a normalized family $(\Omega_\tau)_{|\tau|\leq1}\subset\mathcal{D}^r$ of domains in $\mathcal{AS}^r_\delta$, parametrized by $\gamma$, we define the infinitesimal deformation function $n_\gamma(\tau,s)$ as  
\begin{equation}
    n_\gamma(\tau,s):=\omega(\partial_\tau\gamma(\tau, s), T_{\gamma}(\tau,s)),
\end{equation}
where $T_{\gamma}(\tau,s)$ is the unit tangent vector to $\partial \Omega_{\tau}$ at the point $\gamma(\tau,s)$, $T_{\gamma}(\tau,s) := \frac{\partial_s\gamma(\tau,s)}{\| \partial_s\gamma(\tau,s) \|}$. We refer to \cite{DS}[Page 245] for the formula of the infinitesimal deformation function for (normalized) axially symmetric Birkhoff billiard tables. By the regularity properties of $\gamma$, $n_\gamma$ is $C^0$ in $\tau$ and $C^{r-1}$ in $s$. 
\begin{remark} \label{lunedi}
We finally notice that, as a consequence of the normalization, for a normalized family $(\Omega_{\tau})_{|\tau| \le 1} \subset \mathcal{AS}^r_\delta$, $n_{\gamma}(\tau,0) = n_{\gamma}(\tau,1/2) = 0$ for every $|\tau| \le 1$.
We stress that the above properties will be fundamental in order to conclude the proof of the main theorem; as it will become clear later, this normalization will compensate for the lack of periodic orbits of period smaller than $3$. 
\end{remark}
\noindent The statement and the proof of the next lemma is the same of Lemma 3.3 in \cite{DS}. 
\begin{lemma}
Let $(\Omega_\tau)_{|\tau|\leq1}\subset\mathcal{D}^r$ be a normalized family of domains in $\mathcal{AS}^r_\delta$, parametrized by $\gamma$. Then
    \begin{enumerate}
        \item[$(a)$] for any other parametrization $\Tilde{\gamma}$ as in (\ref{equivalent families}), if $n_\gamma(\tau,\cdot) \equiv 0$ for some $\tau \in [-1,1]$ then $n_{\Tilde{\gamma}}(\tau,\cdot) \equiv 0 $ for the same $\tau \in [-1,1]$;
        \item[$(b)$] $n_\gamma(\tau,s)=0$ for all $(\tau,s)\in [-1,1]\times\mathbb{T}$ if and only if $(\Omega_\tau)_{|\tau|\leq1}$ is a constant family.
    \end{enumerate}
\end{lemma}
\noindent By using the lemma above, we can rephrase Theorem \ref{THMC} in terms of the infinitesimal deformation function. 
\begin{theorem}\label{THM3} Let $r=6$. There exists $\delta>0$ such that if $(\Omega_\tau)_{|\tau|\leq1}$ is a normalized area-isospectral family of domains in $\mathcal{AS}^r_\delta$, then $n_\gamma\equiv0$ for every parametrization $\gamma$.
\end{theorem}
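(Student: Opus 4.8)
The plan is to transcribe the strategy of \cite{DS} into the symplectic setting, replacing trajectory lengths by the actions $\omega_q$ and Euclidean arc length by the affine parameter. Fix a normalized area-isospectral family $(\Omega_\tau)_{|\tau|\le1}\subset\mathcal{AS}^r_\delta$ and fix $\tau$. As observed in the Remark after Definition \ref{vieste}, area-isospectrality forces the marked area spectrum, hence the maximal action $\mathcal{A}_q(\tau)$ of rotation number $1/q$, to be independent of $\tau$ for every $q\ge 3$. The first step is to produce, for each such $q$, an axially symmetric maximizer: among the inscribed $q$-gons of maximal area one selects one that is symmetric with respect to the axis $\Delta$ (using the axial symmetry of $\Omega_\tau$ and the affine invariance of $\omega$), and this symmetry pins the phase of its ordered affine parameters $\{s_j(\tau)\}_{j=0}^{q}$; I write $\lambda_j:=s_j-s_{j-1}$ as in \cite{BaBeNa}.

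Second, I would differentiate $\mathcal{A}_q(\tau)\equiv\mathrm{const}$ in $\tau$. Since $\mathbf{s}\mapsto\omega_q$ has a critical point at the orbit, the envelope theorem annihilates the terms $\partial_{s_j}\omega_q\cdot\dot{s}_j$, leaving only the explicit $\tau$-derivative; using the bilinearity and antisymmetry of $\omega=\det$ and a reindexing one obtains
\begin{equation*}
0=\frac{d}{d\tau}\mathcal{A}_q(\tau)=\sum_{j=0}^{q-1}\omega\bigl(\partial_\tau\gamma(\tau,s_j),\,\gamma(\tau,s_{j+1})-\gamma(\tau,s_{j-1})\bigr).
\end{equation*}
The symplectic billiard orbit condition is precisely $\gamma(\tau,s_{j+1})-\gamma(\tau,s_{j-1})=c_j\,T_\gamma(\tau,s_j)$ with $c_j=\|\gamma(\tau,s_{j+1})-\gamma(\tau,s_{j-1})\|>0$, so the relation collapses to the linear system
\begin{equation*}
\sum_{j=0}^{q-1}c_j\,n_\gamma(\tau,s_j)=0,\qquad q\ge 3,
\end{equation*}
which is the reformulation announced for Section \ref{S4}.

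Third, I would analyse this system asymptotically in $q$ in the affine parametrization. Invoking the technical lemma of Section \ref{S5} (the refinement of Proposition 3.3 in \cite{BaBeNa}), the ordered parameters equidistribute, $s_j=\bar{s}+j/q+O(q^{-2})$, and the weights admit an asymptotic expansion in $1/q$ with positive leading term $q^{-1}w(s_j)$ for an explicit affine-geometric density $w$; moreover the axial symmetry forces $n_\gamma(\tau,\cdot)$ to have a definite parity in the affine parameter. Setting $h:=w\,n_\gamma(\tau,\cdot)$ and applying Poisson summation, $\sum_j h(s_j)=q\sum_k\hat{h}(qk)e^{2\pi i q k\bar{s}}+(\text{lower order})$, the $k=0$ term yields $\int_{\mathbb{T}}h=0$ while the first harmonic identifies $\hat{h}(q)$ up to a controlled error. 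Letting $q$ range over all integers $\ge 3$, these relations are encoded by a single linear operator $L_\delta$ sending $n_\gamma(\tau,\cdot)$ to the sequence of such coefficients; the normalization removes exactly the finitely many low modes tangent to the $SL(2,\mathbb{R})$-action (the modes invisible to the $q\ge 3$ relations), so $L_\delta$ acts on the reduced parity subspace and the claim reduces to the injectivity of $L_\delta$.

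Finally, I would prove injectivity by perturbation off the ellipse. At $\delta=0$ the normalized domain is a disk, $w$ is constant, and $L_0$ is, up to a nonzero diagonal factor, the Fourier-coefficient map, hence injective on the reduced subspace. Because the class is only $C^{r+1}=C^{7}$, the Fourier tails decay polynomially; estimating $L_\delta-L_0$ through the precise affine-coordinate bounds of Section \ref{S5} shows it is small when $\delta$ is small and when $q$ is large, the regularity $r=6$ providing exactly the number of terms in the affine expansions, and the Fourier decay, needed to close the estimate. A diagonal-dominance (Neumann-type) argument then gives injectivity of $L_\delta$ for $\delta$ small enough, forcing $n_\gamma(\tau,\cdot)\equiv 0$ for every $\tau$, and hence, by part $(a)$ of the previous lemma, $n_\gamma\equiv 0$ for every parametrization. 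I expect this last step --- the uniform control of the errors in the expansions of $s_j$ and $c_j$ and the verification that the diagonal Fourier part dominates for small $\delta$ --- to be the main obstacle, exactly as in \cite{DS}.
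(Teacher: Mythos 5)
Your skeleton coincides with the paper's: symmetric maximizing orbits, the envelope identity $\sum_j c_j\, n_\gamma(\tau,s_j)=0$ of Proposition \ref{subdiff}, the affine expansions of $s_j$ and of the weights $c_j$, Fourier analysis of $u=\rho^{1/3}n_\gamma$, and a smallness-in-$\delta$ diagonal-dominance argument closed by the normalization conditions on the low modes. There are, however, two genuine gaps. First, your construction of the symmetric maximizer is not justified: for an axially symmetric domain the \emph{set} of maximal-area inscribed $q$-gons is invariant under the reflection, but an individual global maximizer need not be, so you cannot simply ``select one that is symmetric'' among the maximizers of the marked area spectrum. The paper instead maximizes the action over the restricted class of axially symmetric configurations passing through the marked point (Lemma \ref{max axially}); since this constrained maximum $\Delta_q(\tau)$ is no longer the marked area spectrum, its constancy in $\tau$ does not follow from isospectrality alone --- one needs that $\Delta_q(\tau)$ is a continuous function taking values in the measure-zero set $\mathcal{A}(\Omega_\tau)=\mathcal{A}(\Omega_0)$ (Sard's theorem, Lemma \ref{lemma Sard}). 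This ingredient is absent from your argument, and without it the quantity you differentiate is not known to be constant, so the linear system never appears.

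Second, the step you defer as ``the main obstacle'' hides a structural issue that a straight Neumann-series bound on $L_\delta-L_0$ cannot resolve. After substituting the expansions, the $s=0$ term of the convolution produces the contribution $-\frac{2}{q^2}\sum_{k\ge1}\hat u_k(\beta_k+2\pi i k\alpha_k)$, a fixed constant times $q^{-2}$. This is too large to be absorbed as a perturbation of the diagonal: at the exponent $t=2$, where it would be commensurate with the other terms, the sub-multiple contribution $\sum_{k\ge 2}|\hat u_{kq}|$ already produces a factor $\zeta(2)-1$ exceeding the lower bound $0.26$ of the diagonal, so dominance fails; at $t=4$, where dominance holds for everything else, this term is of the wrong order. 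The paper's resolution (end of Section \ref{sezione finale}) is to isolate it on the right-hand side of (\ref{ordine}), note that all remaining terms are $O(q^{-4})$ while it is $O(q^{-2})$, and conclude that the constant $\sum_{k\ge1}\hat u_k(\beta_k+2\pi i k\alpha_k)$ must itself vanish, reducing the problem to the homogeneous system (\ref{questa}) treated in Lemma \ref{liposuzione}. Both the Sard/continuity argument and this separation of the $s=0$ term are essential ideas missing from your proposal; with them supplied, the rest of your outline matches the paper's proof.
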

\noindent Consider a family of domains $(\Omega_\tau)_{|\tau|\leq1}\subset\mathcal{D}^r$. For every $q\geq3$, if $\mathbf{s}(\tau) = \lbrace s_i(\tau)\}_{i=0}^q$ are the affine parameters for a $q$-periodic sequence corresponding to points in $\partial \Omega_\tau$, we indicate 
$$\omega_q(\tau;\mathbf{s}(\tau)) := \sum_{j = 0}^{q-1} \omega (\gamma(\tau,s_j(\tau)),\gamma(\tau,s_{j+1}(\tau))).$$ 
Clearly, if $\mathbf{s}(\tau)$ gives a $q$-periodic orbit, then $\omega_q(\tau;\mathbf{s}(\tau))$ is its action. \\
In the sequel, for the sake of simplicity, we omit the dependence on $\tau$ of $\mathbf{s}$ and $s_i$. Let $(\Omega_\tau)_{|\tau|\leq1}\subset \mathcal{AS}^r_\delta$ be a normalized family of domains. For every $q \ge 3$, let $\mathbf{s} = \{ s_j \}_{j = 0}^q$ be the affine parameters corresponding to the ordered vertices of an axially symmetric $q$-periodic orbit for the symplectic billiard in $\Omega_\tau$, passing through the marked point of $\partial \Omega_{\tau}$. Moreover, let
\begin{equation} \label{max}
    [-1,1] \ni \tau \mapsto \Delta_q(\tau) := \max_{\mathbf{s}} \, \omega_q(\tau;\mathbf{s}) \in \mathbb{R}
\end{equation}
be the function associating to each domain $\Omega_\tau$ twice the area of the $q$-periodic orbit constructed in Lemma \ref{max axially}. More precisely, $\Delta_q(\tau)$ is twice the maximal area among all axially symmetric $q$-periodic orbits for the symplectic billiard in $\Omega_{\tau}$, passing through the marked point of $\partial \Omega_\tau$.
\begin{proposition} \label{subdiff} 
Let $(\Omega_\tau)_{|\tau|\leq1}\subset\mathcal{AS}^r_\delta$ be a normalized area-isospectral family of domains parametrized by $\gamma$. If $\bar{\mathbf{s}} = (\bar{s}_0, \ldots , \bar{s}_q)$ realizes the maximum (\ref{max}), that is
$\omega_q(\tau;\mathbf{\bar{s}}) = \Delta_q(\tau)$, then 
\begin{equation} \label{d tau = 0}
\partial_\tau \omega_q(\tau,\Bar{\mathbf{s}}) =\sum_{j=0}^{q-1}
\|\gamma(\tau,\Bar{s}_{j+1})-\gamma(\tau,\Bar{s}_{j-1})\|
n_\gamma(\tau,\Bar{s}_j) = 0.
\end{equation}
\end{proposition}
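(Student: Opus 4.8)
The plan is to differentiate the action along the maximizing orbit and exploit the fact that the orbit is critical for the spatial variables. Concretely, write $\omega_q(\tau;\mathbf{s})$ as a sum of terms $\omega(\gamma(\tau,s_j),\gamma(\tau,s_{j+1})) = \det(\gamma(\tau,s_j),\gamma(\tau,s_{j+1}))$ and differentiate with respect to $\tau$, treating $\bar{\mathbf{s}}$ as the fixed maximizing configuration. By the chain rule, $\partial_\tau \omega_q(\tau,\bar{\mathbf{s}})$ splits into a part coming from the explicit $\tau$-dependence through $\partial_\tau\gamma$ and a part coming from the $\tau$-dependence of the maximizer $\bar{\mathbf{s}}(\tau)$. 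The key observation is that the second part vanishes: since $\bar{\mathbf{s}}$ realizes the maximum (an interior critical point by the strict inequalities $0<\bar{s}_1<\dots$ established in Lemmas \ref{max axially} and \ref{non serve}), each spatial partial derivative $\partial_{s_j}\omega_q$ is zero at $\bar{\mathbf{s}}$, so those contributions drop out by the envelope theorem. Hence only the explicit deformation term survives.

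The next step is to compute that surviving term. Differentiating $\det(\gamma(\tau,s_j),\gamma(\tau,s_{j+1}))$ with respect to $\tau$ through the explicit dependence and using the bilinearity of the determinant $\omega(x,y)=\det(x,y)$, the contribution of the vertex $\gamma(\tau,\bar{s}_j)$ collects a term $\det(\partial_\tau\gamma(\tau,\bar{s}_j),\gamma(\tau,\bar{s}_{j+1}))$ from the summand $\omega(\gamma_j,\gamma_{j+1})$ together with a term $\det(\gamma(\tau,\bar{s}_{j-1}),\partial_\tau\gamma(\tau,\bar{s}_j))$ from the summand $\omega(\gamma_{j-1},\gamma_j)$. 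Combining these two by antisymmetry of $\det$ gives $\det\!\bigl(\partial_\tau\gamma(\tau,\bar{s}_j),\,\gamma(\tau,\bar{s}_{j+1})-\gamma(\tau,\bar{s}_{j-1})\bigr)$. I would then write the chord $\gamma(\tau,\bar{s}_{j+1})-\gamma(\tau,\bar{s}_{j-1})$ as its length times the unit vector in that direction; the point is that this chord is parallel to the tangent line at $\gamma(\tau,\bar{s}_j)$ — precisely the defining symplectic-billiard reflection condition $z-x\in T_y\partial\Omega$ that holds along any genuine trajectory — so the unit chord direction equals $\pm T_\gamma(\tau,\bar{s}_j)$. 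Substituting and recalling the definition $n_\gamma(\tau,s)=\omega(\partial_\tau\gamma(\tau,s),T_\gamma(\tau,s))$ yields exactly the $j$-th summand $\|\gamma(\tau,\bar{s}_{j+1})-\gamma(\tau,\bar{s}_{j-1})\|\,n_\gamma(\tau,\bar{s}_j)$.

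Finally, since the family is area-isospectral, the function $\Delta_q(\tau)$ is constant in $\tau$ (the maximal area among the relevant symmetric $q$-periodic orbits is a spectral invariant, being read off the area spectrum), so $\partial_\tau\Delta_q(\tau)=0$; combined with the envelope computation $\partial_\tau\Delta_q(\tau)=\partial_\tau\omega_q(\tau,\bar{\mathbf{s}})$, this forces the whole expression to vanish and establishes (\ref{d tau = 0}). I expect the main obstacle to be the bookkeeping at the endpoints of the summation, where the symmetric construction introduces the reflected indices $\bar{s}_{2k-j}=-\bar{s}_j$ (and the half-weighted term $\tfrac12\omega(\gamma(s_k),\gamma(-s_k))$ in the odd case): one must check that the telescoping of determinant terms matches up correctly across the fold so that every vertex really receives its two neighbours $\bar{s}_{j-1},\bar{s}_{j+1}$, and that the symmetry does not spoil the chord-tangent parallelism or introduce spurious boundary contributions. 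Once the envelope theorem is invoked cleanly, however, the remaining algebra is the routine bilinear manipulation of $\det$ sketched above.
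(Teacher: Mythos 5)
Your overall architecture matches the paper's: first show $\partial_\tau\omega_q(\tau,\bar{\mathbf{s}})=0$ using isospectrality and maximality, then compute the derivative explicitly via bilinearity of $\det$, reindexing/antisymmetry, and the reflection condition $\gamma(\tau,\bar s_{j+1})-\gamma(\tau,\bar s_{j-1})\parallel T_\gamma(\tau,\bar s_j)$. The computational half of your argument is exactly the paper's and is correct.

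The genuine gap is in your justification that $\Delta_q(\tau)$ is constant. You assert that the maximal symmetric $q$-periodic area is ``a spectral invariant, being read off the area spectrum,'' but the area spectrum $\mathcal{A}(\Omega_\tau)$ is an \emph{unmarked} set of numbers: knowing that $\mathcal{A}(\Omega_\tau)=\mathcal{A}(\Omega_0)$ for all $\tau$ tells you only that $\Delta_q(\tau)$ takes values in a fixed set, not that it picks out the same element for every $\tau$. The missing ingredients are precisely the content of Lemma \ref{lemma Sard}: the spectrum has zero Lebesgue measure (by Sard, using $r\ge 2$), and $\tau\mapsto\Delta_q(\tau)$ is continuous (because the family is $C^1$ parametric); a continuous function on the interval $[-1,1]$ with values in a measure-zero set has an image which is an interval of measure zero, hence a point. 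Without this step the whole proposition fails, so it cannot be waved away. A secondary, more repairable issue: your chain-rule/envelope-theorem decomposition presupposes that the maximizer $\bar{\mathbf{s}}(\tau)$ depends differentiably on $\tau$, which you have not established (maximizers need not be unique or vary smoothly). The paper avoids this entirely: once $\Delta_q$ is known to be constant, the fixed configuration $\bar{\mathbf{s}}$ satisfies $\omega_q(\tau',\bar{\mathbf{s}})\le\Delta_q(\tau')=\Delta_q(\tau)=\omega_q(\tau,\bar{\mathbf{s}})$ for all $\tau'$, so the $C^1$ function $\tau'\mapsto\omega_q(\tau',\bar{\mathbf{s}})$ has an interior maximum at $\tau'=\tau$ and its derivative vanishes there --- no regularity of the maximizer, and no criticality in the spatial variables, is needed for this part. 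You should adopt that one-sided argument rather than invoking the envelope theorem.
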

\begin{proof}
We first prove that $\partial_\tau \omega_q(\tau,\Bar{\mathbf{s}}) = 0$. By definition, $\Delta_q(\tau)\in\mathcal{A}(\Omega_\tau)$. Moreover, since the family $(\Omega_\tau)_{|\tau|\leq1}\subset\mathcal{AS}^r_\delta$ is assumed to be $C^1$ parametric, $\Delta_q$ is a continuous function. By these facts and Lemma \ref{lemma Sard}, it follows that if $(\Omega_\tau)_{|\tau|\leq1} \subset \mathcal{AS}^r_\delta$ is area-isospectral then $\Delta_q$ is necessarily constant. We recall that $\Delta_q(\tau)$ is --by definition-- twice the maximal area among all axially symmetric $q$-periodic orbits for the symplectic billiard in $\Omega_{\tau}$, passing through the marked point of $\partial \Omega_\tau$. Consequently, if $\Bar{\mathbf{s}}$ realizes the maximum at $\tau$ then
$$\omega_q(\tau',\Bar{\mathbf{s}}) \leq\Delta_q(\tau') = \Delta_q(\tau) = \omega_q(\tau;\bar{\mathbf{s}}) \qquad \forall \tau'\in[-1,1],$$
where --in the first equality-- we have used the fact that (in the area-isospectral case) $\Delta_q$ is constant. Since the family $(\Omega_\tau)_{|\tau|\leq1}\subset\mathcal{AS}^r_\delta$ is assumed to be $C^1$ parametric, $\tau \mapsto \omega_q(\tau;\bar{\mathbf{s}})$ is $C^1$ and the statement immediately follows. Finally, we verify the explicit expression for $\partial_\tau \omega_q(\tau,\Bar{\mathbf{s}})$ by a direct computation:
\begin{equation*}
\begin{split}
\partial_\tau \omega_q(\tau,\Bar{\mathbf{s}})&=\sum_{j=0}^{q-1}\partial_\tau\omega(\gamma(\tau,\Bar{s}_j),\gamma(\tau,\Bar{s}_{j+1}))\\ 
&=\sum_{j=0}^{q-1}\left[\omega(\partial_\tau\gamma(\tau,\Bar{s}_j),\gamma(\tau,\Bar{s}_{j+1}))+\omega(\gamma(\tau,\Bar{s}_j),\partial_\tau\gamma(\tau,\Bar{s}_{j+1}))\right]\\ 
&=\sum_{j=0}^{q-1}\omega(\partial_\tau\gamma(\tau,\Bar{s}_j),\gamma(\tau,\Bar{s}_{j+1})-\gamma(\tau, \Bar{s}_{j-1}))\\ 
&=\sum_{j=0}^{q-1}\|\gamma(\tau,\Bar{s}_{j+1})-\gamma(\tau,\Bar{s}_{j-1})\|\omega(\partial_\tau\gamma(\tau,\Bar{s}_j),T_{\gamma}(\tau,\Bar{s}_j)) \\ &=\sum_{j=0}^{q-1}
\|\gamma(\tau,\Bar{s}_{j+1})-\gamma(\tau,\Bar{s}_{j-1})\| 
n_\gamma(\tau,\Bar{s}_j). \end{split}
\end{equation*}
where, in the fourth equality, we have used the symplectic billiard dynamics: 
$$\gamma(\tau,\Bar{s}_{j+1})-\gamma(\tau, \Bar{s}_{j-1}) = \|\gamma(\tau,\Bar{s}_{j+1})-\gamma(\tau,\Bar{s}_{j-1})\| \, T_{\gamma}(\tau,\Bar{s}_j) \qquad \forall j = 1, \ldots, q-1.$$
\end{proof}

\section{Preliminary results in affine differential geometry}\label{S5}

This section is devoted to collecting some notions and results in affine differential geometry, which will be useful in the next section. We recall that the affine arc length and the affine perimeter are given respectively by
$$s(t) = \int^t_0 \kappa^{1/3}(r)dr \qquad 0 \le t \le l$$
and
$$\lambda = \int^l_0 \kappa^{1/3}(r)dr$$
where $t$ is the arc length and $\kappa(t)$ the (ordinary) curvature of $\partial \Omega$. As in the previous sections, let $\gamma(s)$ be the affine arc length parametrization of $\partial \Omega$. Then --see \cite{Bu}[Section 3]-- $\gamma$ is characterized by:
$$\omega (\gamma^{\prime}, \gamma^{\prime \prime}) = 1, \qquad \omega (\gamma^{\prime}, \gamma^{\prime \prime \prime}) = 0$$
and
$$k := \omega(\gamma'',\gamma''')$$
is the affine curvature of $\partial \Omega$ (remind that the affine curvature of the circle of affine unitary perimeter is $(2\pi)^2$). Moreover, differentiating $\omega (\gamma^{\prime}, \gamma^{\prime \prime \prime}) = 0$ and using the definition of $k$, we obtain 
$$\gamma''' = -k \gamma'.$$
\noindent We underline that $\|\gamma'(s)\|=\rho(t(s))$, where $\rho$ and $t$ are the (usual) ray of curvature and arc length, respectively. From now on, we suppose $\lambda = 1$ (see Assumption \ref{no hp}). \\
\noindent The next technical lemma is a refinement of Proposition 3.3 in \cite{BaBeNa}. These coordinates and their expansions are the analogue, for symplectic billiards, of those computed by V.F. Lazutkin for the usual Birkhoff billiards \cite{Laz}.
\begin{lemma}
For $q \ge 3$, let $\{ \gamma(s_j)\}_{j = 0}^q$ be the ordered vertices of a periodic trajectory for the symplectic billiard with rotation number $1/q$ and $s_0 = 0$ and $s_q = 1$. If
$$\lambda_j := s_j - s_{j-1} \qquad \forall j = 1, \ldots, q$$ 
then
 \begin{equation} \label{PRIMA}
 \lambda_j=\frac{1}{q}-\frac{1}{30q^3}\int_0^1 k(s) ds+\frac{1}{30q^3}k\left(\frac{j}{q}\right)-\frac{1}{60q^4}k'\left(\frac{j}{q}\right)+O\left(\frac{1}{q^5}\right),
 \end{equation}
 where, for some constant $C > 0$ independent on $q \ge 3$:
$$\left\| O\left(\frac{1}{q^5}\right) \right\| \le C \, \frac{\| k' \|_{C^2}}{q^5}.$$
Moreover
  \begin{equation}\label{sj}
         s_j=\frac{j}{q} +\frac{1}{30q^2}\int^\frac{j}{q}_0 k(s) ds -\frac{j}{30q^3}\int_0^1 k(s)ds 
         +O\left(\frac{1}{q^4}\right).
     \end{equation}
\end{lemma}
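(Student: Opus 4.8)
The plan is to set up the periodic-orbit conditions as a system of equations governing the $\lambda_j$, expand the generating function $\omega(\gamma(s_{j-1}),\gamma(s_j))$ and its derivatives in powers of the small quantities $\lambda_j \sim 1/q$ using the affine-geometric identities $\omega(\gamma',\gamma'')=1$, $\omega(\gamma',\gamma''')=0$ and $\gamma'''=-k\gamma'$, and then solve this system order by order in $1/q$. The starting point is the criticality condition for the maximizer: at each interior vertex the orbit satisfies
$$\partial_2\omega(\gamma(s_{j-1}),\gamma(s_j)) + \partial_1\omega(\gamma(s_j),\gamma(s_{j+1})) = 0 \qquad \forall j.$$
Since $\omega(x,y)=\det(x,y)$ is bilinear, each of these derivatives is itself a determinant involving $\gamma(s_j)$ and $\gamma(s_{j\pm1})$, so the plan is to Taylor-expand $\gamma(s_{j\pm1}) = \gamma(s_j \pm \lambda_{j+1 \text{ or } j})$ around $\gamma(s_j)$, repeatedly substituting $\gamma'''=-k\gamma'$ (and its differentiated consequences $\gamma^{(4)} = -k'\gamma' - k\gamma''$, etc.) to keep every term expressed through $\gamma'(s_j)$, $\gamma''(s_j)$, and the scalar functions $k,k',\dots$ evaluated at $s_j$.

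\medskip

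\noindent \textbf{The main computational engine} is the following. Writing $a := \lambda_j$, $b := \lambda_{j+1}$, expanding the determinant conditions and using $\omega(\gamma',\gamma'')=1$ together with the affine ODE reduces each vertex equation to a scalar relation of the schematic form
$$(b - a) + c_2\,(b^2 - a^2) + c_3(k)\,(b^3 + a^3) + \dots = O(1/q^{5}),$$
where the coefficients $c_i$ are explicit and involve $k$ and its derivatives at $s_j = j/q$. The plan is to posit the ansatz $\lambda_j = \tfrac1q + \tfrac{1}{q^3}A(j/q) + \tfrac{1}{q^4}B(j/q) + O(1/q^5)$ for unknown functions $A,B$, insert it into these relations, and match coefficients of like powers of $1/q$. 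Summing the constraint $\sum_j \lambda_j = 1$ fixes the constant part of $A$, which is where the term $-\tfrac{1}{30q^3}\int_0^1 k$ comes from; the local fluctuation $+\tfrac{1}{30q^3}k(j/q)$ and the correction $-\tfrac{1}{60q^4}k'(j/q)$ emerge from the local matching at each vertex. The identity \eqref{sj} then follows by summing \eqref{PRIMA}: $s_j = \sum_{i=1}^j \lambda_i$, approximating the sum by the integral via Euler--Maclaurin and absorbing the discrepancy into the $O(1/q^4)$ remainder.

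\medskip

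\noindent \textbf{The hard part} will be twofold. First, one must carry the determinant expansions to high enough order consistently — since the leading balance $b-a$ is itself $O(1/q^4)$, every lower-order term must be tracked to relative order $1/q^4$ or $1/q^5$, which demands care that no contribution is dropped prematurely and that the substitutions $\gamma''' = -k\gamma'$ are applied at the right orders. Second, and more delicate, is making the remainder estimate uniform: one must show $\|O(1/q^5)\| \le C\|k'\|_{C^2}/q^5$ with $C$ independent of $q$. This requires controlling the Taylor remainders of $\gamma$ in terms of the affine curvature and its derivatives, and checking that inverting the near-diagonal linear system relating the $\{\lambda_j\}$ (a discrete second-difference-type operator) does not amplify the error — i.e. that the inverse has $q$-uniformly bounded norm in the appropriate sense. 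This uniform-constant bookkeeping, rather than the formal expansion itself, is where the refinement over Proposition 3.3 of \cite{BaBeNa} genuinely lies, and it is the step I would treat most carefully.
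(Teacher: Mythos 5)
Your proposal is correct and follows essentially the same route as the paper: expand the vertex (criticality) conditions via the affine Frenet equation $\gamma'''=-k\gamma'$ to obtain a local recursion for the $\lambda_j$, insert an ansatz of the form $\lambda_j=\tfrac1q+\tfrac{A(j/q)}{q^3}+\tfrac{B(j/q)}{q^4}+O(q^{-5})$, telescope in $j$, and fix the remaining constant with $\sum_j\lambda_j=1$, then sum to get $s_j$. The only difference is that the paper does not re-derive the determinant expansions: it imports the recursion $\lambda_{j+1}=\lambda_j+\tfrac1{30}k'(s_j)\lambda_j^4+O(\lambda_j^6)$ and the third-order expansion of $\lambda_j$ from Propositions 3.2--3.3 of \cite{BaBeNa} and only bootstraps the $q^{-4}$ correction $\sigma(j,q)$, which is exactly your local-matching-plus-normalization step.
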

\begin{proof} We first prove formula (\ref{PRIMA}). We start recalling that, by Propositions 3.2 and 3.3 in \cite{BaBeNa}, it holds that
\begin{equation}\label{lambaj+1}
        \begin{split}
            \lambda_{j+1}&=\lambda_j+\frac{1}{30}k'(s_j)\lambda_j^4+O(\lambda_j^6) \qquad \text{and} \\
            \lambda_j&=\frac{1}{q}-\frac{1}{30q^3}\int_0^1 k(s) ds + \frac{1}{30q^3}k\left(\frac{j}{q}\right)+\frac{\sigma(j,q)}{q^4}
        \end{split}
    \end{equation}
    with $\sigma(j,q)=O(1)$. 
    \begin{remark}
    We stress that, since we are applying the Implicit function Theorem, in order to obtain the first formula in (\ref{lambaj+1}), we need to ask the affine parametrization to be at least $C^6$, that is $r = 6$.
    \end{remark}
\noindent    Combining the two equalities, we obtain 
    \begin{equation*}\label{lambdaj}
        \begin{split}
            \frac{\sigma(j+1,q)}{q^4}&=\frac{\sigma(j,q)}{q^4}+\frac{1}{30q^3}\left(k\left(\frac{j}{q}\right)-k\left(\frac{j+1}{q}\right)+\frac{1}{q}k'\left(\frac{j}{q}\right)\right)+O\left(\frac{1}{q^6}\right)\\
            &=\frac{\sigma(j,q)}{q^4}-\frac{1}{60q^5}k''\left(\frac{j}{q}\right)+O\left(\frac{1}{q^6}\right),
              \end{split}
    \end{equation*}
    so that
\begin{equation} \label{sigmaj}
\sigma(j,q)=\sigma(1,q)- \frac{1}{60q} \sum_{i=1}^{j-1}k''\left(\frac{i}{q}\right)+O\left(\frac{1}{q}\right) = \sigma(1,q)-\frac{1}{60}\left(k'\left(\frac{j}{q}\right)-k'\left(\frac{1}{q}\right)\right)+O\left(\frac{1}{q}\right),
\end{equation}
where --in the last equality-- we have used the fact that by Taylor expansion:
    $$\frac{1}{q} \sum_{i=1}^{j-1}k''\left(\frac{i}{q}\right) -\int_{\frac{1}{q}}^\frac{j}{q}k''(s)ds = \frac{1}{q} \sum_{i=0}^{j-1}k''\left(\frac{i}{q}\right) - k'\left( \frac{j}{q} \right) + k'\left( \frac{1}{q} \right) = O\left(\frac{1}{q}\right)
    .$$
Therefore:
\begin{equation*} 
 \lambda_j=\frac{1}{q}-\frac{1}{30q^3}\int_0^1 k(s) ds+\frac{1}{30q^3}k\left(\frac{j}{q}\right) + \frac{\sigma(1,q)+ \frac{1}{60} k'\left( \frac{1}{q}\right)}{q^4} -\frac{1}{60q^4}k'\left(\frac{j}{q}\right)+O\left(\frac{1}{q^5}\right).
 \end{equation*}
 We continue by summing up the last equality for $j = 1, \ldots, q$. Recalling that $\sum_{j = 1}^{q} \lambda_j = 1$, we obtain:
    \begin{equation*}
        \begin{split}
            0=&\frac{1}{30q^3}\sum_{j=1}^{q}k\left(\frac{j}{q}\right)-\frac{1}{30q^2}\int_0^1 k(s)ds-\frac{1}{60q^4}\sum_{j=1}^{q}k' \left(\frac{j}{q}\right) +\frac{\sigma(1,q)+\frac{1}{60}k'\left(\frac{1}{q}\right)}{q^3} + O \left(\frac{1}{q^4}\right)\\
            =& \frac{1}{30q^2}\sum_{j=1}^{q}\int_\frac{j}{q}^\frac{j+1}{q}k\left(\frac{j}{q}\right)-k(s)ds-\frac{1}{60q^4} \sum_{j=1}^{q} k'\left(\frac{j}{q}\right)+\frac{\sigma(1,q)+\frac{1}{60}k'\left(\frac{1}{q}\right)}{q^3}+O\left(\frac{1}{q^4}\right).
        \end{split}
    \end{equation*}
    Since $$\int_\frac{j}{q}^\frac{j+1}{q}k\left(\frac{j}{q}\right)-k(s)ds=-\frac{1}{2q^2}k'\left(\frac{j}{q}\right)+O\left(\frac{1}{q^3}\right) \qquad \text{and} \qquad  \frac{1}{q} \sum_{j=1}^{q}k'\left(\frac{j}{q}\right) = O\left(\frac{1}{q}\right),$$
    we finally obtain 
    \begin{equation*}
        \frac{\sigma(1,q)+\frac{1}{60}k'\left(\frac{1}{q}\right)}{q^3} = O\left(\frac{1}{q^4}\right)
    \end{equation*}
By using (\ref{sigmaj}), we immediately obtain formula \eqref{PRIMA}. \\
\indent Formula (\ref{sj}) comes by summing up expansion \eqref{PRIMA} for $ i = 1, \ldots , j$:
    \begin{equation*}
    \begin{split}
        s_j &= \sum_{i=1}^{j}\lambda_i = \frac{j}{q} - \frac{j}{30q^3}\int_0^1 k(s)ds + \frac{1}{30 q^3} \sum_{i = 1}^{j} k \left( \frac{i}{q} \right) - \frac{1}{60q^4}\sum_{i=1}^{j}k'\left(\frac{i}{q}\right)+O\left(\frac{1}{q^4}\right). \\
        \end{split}
    \end{equation*}
Since now, by Taylor's expansion
$$\frac{1}{30q^3}\sum_{i=1}^{j}k\left(\frac{i}{q}\right)=\frac{1}{30q^2}\int_0^\frac{j}{q}k(s)ds+\frac{1}{60q^4}\sum_{i=1}^{j}k'\left(\frac{i}{q}\right)+O\left(\frac{1}{q^4}\right),$$
formula \eqref{sj} immediately follows. 
\end{proof}

\section{Proof of Theorem \ref{THM3}} \label{sezione finale}
For $\delta>0$, if $(\Omega_\tau)_{|\tau|\leq1} \subset \mathcal{AS}^r_{\delta}$ is a normalized area-isospectral family of domains, parametrized by $\gamma$, $n_{\gamma}(\tau,\cdot)$ is an even function. In particular, for every $\tau\in[-1,1]$ we can write the Fourier expansion of $n_{\gamma}(\tau,s)$ in the basis $\{ e^{2\pi i k s} \}_{k\in\mathbb{Z}}$:
    $$n_\gamma(\tau, s)=\sum_{|k|\geq1}\hat{n}_k(\tau)e^{2\pi iks}$$ 
    where $\hat{n}_k(\tau)$ denotes the $k$-th Fourier coefficient of $n_\gamma(\tau,\cdot)$ and by symmetry $\hat{n}_k=\hat{n}_{-k}$. Since $n_\gamma$ has zero-average, $\hat{n}_0=0$.
    We remind that, as a consequence of the normalization (see Remark \ref{lunedi}) $n_{\gamma}(\tau,0) = n_{\gamma}(\tau,1/2) = 0$ for every $|\tau| \le 1$. 
\begin{remark}
 Theorem \ref{THM3} is especially easy to verify when $\Omega_0$ is a circle. In this special case, for every $q \ge 3$ the axially symmetric periodic orbit of Lemma \ref{max axially} is given by $\bar{\mathbf{s}} = \left\{ \frac{j}{q} \right\}_{j = 0}^q$. Consequently, from Proposition \ref{subdiff} we have that
$$\partial_\tau \omega_q(\tau,\Bar{\mathbf{s}})\mid_{\tau=0} =\sum_{j=0}^{q-1}
\bigg\|\gamma\left(0,\frac{j+1}{q}\right)-\gamma\left(0,\frac{j-1}{q}\right)\bigg\|
\, n_\gamma\left(0,\frac{j}{q}\right) = 0.
$$
Since $\bigg\|\gamma\left(0,\frac{j+1}{q}\right)-\gamma\left(0,\frac{j-1}{q}\right)\bigg\|$ is constant for each $j=0,\ldots,q-1$, we get 
\begin{equation}\label{m1}
\sum_{j=0}^{q-1}n_\gamma\left(0,\frac{j}{q}\right)=\sum_{|k|\geq1}\sum_{j=0}^{q-1}\hat{n}_k(0)e^{\frac{2\pi ikj}{q}}=2q\sum_{k\geq1}\hat{n}_{qk}(0) = 0.
\end{equation}
Moreover, since $n_{\gamma}$ is $C^{r-1}$ in $s$, we have 
\begin{equation}\label{m3}
|\hat{n}_{q}(0)| \le \frac{C}{q^{r-1}} \quad \forall q\ge 1   
\end{equation}
for a suitable constant $C > 0$, uniform in $q\ge 1$, which depends only on $n_\gamma$. By \eqref{m1} and \eqref{m3} we have that
\begin{equation} \label{m2}
    |\hat{n}_q(0)|=\left| \sum_{k=2}^{+\infty}\hat{n}_{kq}(0) \right|\le \sum_{k=2}^{+\infty} \frac{C}{(kq)^{r-1}}=\frac{C}{q^{r-1}}(\zeta(r-1)-1)\quad \forall q \ge 3.
\end{equation}
We can therefore repeat the same argument with $C$ replaced by $C(\zeta(r-1)-1)$ and then proceed by induction, obtaining
\begin{equation*}
    |\hat{n}_q(0)|\le \frac{C}{q^{r-1}}\bigl((\zeta(r-1)-1)\bigr)^k,\quad \forall k \ge 1 \text{ and } \forall q\ge 3.
\end{equation*}
Since $0 < \zeta(r-1)-1 < 1$, this implies that $\hat{n}_q(0)=0$ for every $q \ge 3$. In addition, by the normalization condition, $n_\gamma(0,0) = 0 = n_\gamma(0,1/2)$, which yields $\hat{n}_1(0) = \hat{n}_2(0) = 0$. This gives $n_\gamma(0,\cdot)\equiv0$ and therefore the desired statement, $n_\gamma \equiv 0$.



\end{remark}
\noindent Before entering into the details of the proof of Theorem \ref{THM3}, we premise a technical lemma.
\begin{lemma} Let $(\Omega_\tau)_{|\tau|\leq1}\subset\mathcal{D}^r$ be a normalized family of domains in $\mathcal{AS}^r_\delta$, parametrized by $\gamma$. For $q \ge 3$, let $\mathbf{s} = \{ s_j \}_{j = 0}^q$ be the affine parameters corresponding to ordered vertices of a periodic trajectory for the symplectic billiard for the domain $\Omega_0$, with rotation number $1/q$. Then
\begin{equation} \label{generale}
\partial_\tau \omega_q(\tau,\mathbf{s})\mid_{\tau=0}
=\sum_{j=0}^{q-1}n_\gamma\left(0,s_j\right)\rho(t(s_j))^\frac{1}{3}\left[\frac{2}{q} + \frac{1}{15q^3} \left( k\left(\frac{j}{q}\right) - \int_0^1 k(s)ds \right) -\frac{1}{3q^3}k(s_j)+O\left(\frac{1}{q^5}\right)\right],
\end{equation}
where $\rho$ denotes the (usual) ray of curvature of the domain $\Omega_0$.
\end{lemma}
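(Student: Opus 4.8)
The plan is to start from the exact expression for the $\tau$-derivative of the action already obtained inside the proof of Proposition \ref{subdiff}. There, the chain of equalities leading to
\begin{equation*}
\partial_\tau\omega_q(\tau,\mathbf{s})=\sum_{j=0}^{q-1}\|\gamma(\tau,s_{j+1})-\gamma(\tau,s_{j-1})\|\,n_\gamma(\tau,s_j)
\end{equation*}
uses only the symplectic billiard relation $\gamma(\tau,s_{j+1})-\gamma(\tau,s_{j-1})=\|\gamma(\tau,s_{j+1})-\gamma(\tau,s_{j-1})\|\,T_\gamma(\tau,s_j)$, which holds along \emph{any} periodic trajectory and not only along maximizing ones; hence the formula is available here too. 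Evaluating at $\tau=0$ and abbreviating $\gamma=\gamma(0,\cdot)$, the whole statement reduces to expanding the chord length $\|\gamma(s_{j+1})-\gamma(s_{j-1})\|$ up to a remainder $O(q^{-5})$ and multiplying by $n_\gamma(0,s_j)$.

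First I would Taylor expand $\gamma(s_{j\pm1})$ about $s_j$ to fifth order and rewrite every derivative in the frame $\{\gamma'(s_j),\gamma''(s_j)\}$ using the affine relations $\gamma'''=-k\gamma'$, $\gamma^{(4)}=-k'\gamma'-k\gamma''$, $\gamma^{(5)}=(k^2-k'')\gamma'-2k'\gamma''$. This yields
\begin{equation*}
\gamma(s_{j+1})-\gamma(s_{j-1})=A\,\gamma'(s_j)+B\,\gamma''(s_j)+O(q^{-6}),
\end{equation*}
with $A=(\lambda_{j+1}+\lambda_j)-\tfrac{k(s_j)}{6}(\lambda_{j+1}^3+\lambda_j^3)+O(q^{-5})$ and $B$ whose leading part is $\tfrac12(\lambda_{j+1}^2-\lambda_j^2)$. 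The recursion $\lambda_{j+1}=\lambda_j+\tfrac{1}{30}k'(s_j)\lambda_j^4+O(\lambda_j^6)$ of (\ref{lambaj+1}) forces $\lambda_{j+1}-\lambda_j=O(q^{-4})$, so that $B=O(q^{-5})$. Computing the Euclidean norm and factoring out $\|\gamma'(s_j)\|$, the cross term $2AB\langle\gamma',\gamma''\rangle$ contributes only at relative order $O(q^{-4})$; since $A=O(q^{-1})$, the entire $\gamma''$-component is absorbed into the remainder and one gets $\|\gamma(s_{j+1})-\gamma(s_{j-1})\|=\|\gamma'(s_j)\|\big(A+O(q^{-5})\big)$. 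Finally $\|\gamma'(s_j)\|=\rho(t(s_j))^{1/3}$, from $ds=\kappa^{1/3}dt$, which is exactly the prefactor appearing in (\ref{generale}).

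It then remains to expand $A$ by inserting (\ref{PRIMA}) for $\lambda_j$ and $\lambda_{j+1}$. Summing the two expansions gives
\begin{equation*}
\lambda_{j+1}+\lambda_j=\tfrac{2}{q}-\tfrac{1}{15q^3}\int_0^1 k+\tfrac{1}{30q^3}\big(k(\tfrac{j}{q})+k(\tfrac{j+1}{q})\big)-\tfrac{1}{60q^4}\big(k'(\tfrac{j}{q})+k'(\tfrac{j+1}{q})\big)+O(q^{-5}),
\end{equation*}
and after writing $k(\tfrac{j+1}{q})=k(\tfrac{j}{q})+\tfrac{1}{q}k'(\tfrac{j}{q})+O(q^{-2})$ the two terms of order $q^{-4}$ carrying $k'$ cancel exactly, leaving $\lambda_{j+1}+\lambda_j=\tfrac{2}{q}+\tfrac{1}{15q^3}\big(k(\tfrac{j}{q})-\int_0^1 k\big)+O(q^{-5})$. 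Since $\lambda_{j+1}^3+\lambda_j^3=\tfrac{2}{q^3}+O(q^{-5})$, the cubic correction becomes $-\tfrac{1}{3q^3}k(s_j)$, and assembling the two contributions reproduces the bracket in (\ref{generale}).

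The hard part is precisely this order-$q^{-4}$ cancellation: without it the chord expansion would be accurate only to $O(q^{-4})$ and (\ref{generale}) would be false. Carrying it out rigorously, together with checking that the $O(q^{-5})$ remainder is uniform in $j$ (as needed for (\ref{generale}) to be meaningful and for the later summation over $j$), is where the quantitative bound $\|O(q^{-5})\|\le C\,q^{-5}\|k'\|_{C^2}$ of the previous lemma — and hence the regularity $r=6$ — enters.
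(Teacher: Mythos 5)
Your proposal is correct and follows essentially the same route as the paper: start from the identity $\partial_\tau\omega_q(\tau,\mathbf{s})=\sum_j\|\gamma(\tau,s_{j+1})-\gamma(\tau,s_{j-1})\|\,n_\gamma(\tau,s_j)$ established in Proposition \ref{subdiff} (which indeed uses only the reflection law, not maximality), Taylor-expand the chord, use $\lambda_{j+1}-\lambda_j=O(q^{-4})$ to discard the $\gamma''$-component, insert \eqref{PRIMA}, and observe the cancellation of the order-$q^{-4}$ terms carrying $k'$. Your decomposition in the frame $\{\gamma',\gamma''\}$ and the explicit treatment of the norm of $A\gamma'+B\gamma''$ are slightly more careful than the paper's presentation, but the argument is the same.
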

\begin{proof} From Proposition \ref{subdiff}, we know that
     $$\partial_\tau \omega_q(\tau,\Bar{\mathbf{s}})\mid_{\tau=0}=\sum_{j=0}^{q-1}
\|\gamma(0,s_{j+1})-\gamma(0,s_{j-1})\| 
n_\gamma(0,s_j),$$
so that we need to detect $\|\gamma(0,s_{j+1})-\gamma(0,s_{j-1})\|$. Naming $\gamma(0,s)=\gamma(s)$ and recalling that $\lambda_j=s_j-s_{j-1}$:
$$
\begin{aligned}
& \gamma\left(s_{j+1}\right)-\gamma\left(s_j\right)=\gamma^{\prime}\left(s_j\right) \lambda_{j+1}+\frac{\gamma^{\prime \prime}\left(s_j\right)}{2} \lambda_{j+1}^2+\frac{\gamma^{\prime \prime \prime}\left(s_j\right)}{6} \lambda_{j+1}^3+\frac{\gamma^{\prime \prime \prime \prime}\left(s_j\right)}{24} \lambda_{j+1}^4+O\left(\lambda_{j+1}^5\right), \\
& \gamma\left(s_{j-1}\right)-\gamma\left(s_j\right)=-\gamma^{\prime}\left(s_j\right) \lambda_j+\frac{\gamma^{\prime \prime}\left(s_j\right)}{2} \lambda_j^2-\frac{\gamma^{\prime \prime \prime}\left(s_j\right)}{6} \lambda_j^3+\frac{\gamma^{\prime \prime\prime \prime}\left(s_j\right)}{24} \lambda_j^4+O\left(\lambda_j^5\right).
\end{aligned}
$$
Consequently, by expansion \eqref{PRIMA} and the fact that $\lambda_{j+1}-\lambda_j=O(1/q^4)$ (see first formula in \eqref{lambaj+1}), the norm of their difference is
$$\|\gamma(s_{j+1})-\gamma(s_{j-1})\|=\bigg\|\gamma'(s_j)(\lambda_{j+1}+\lambda_j)+\frac{\gamma'''(s_j)}{6}(\lambda_{j+1}^3+\lambda_j^3)+O(1/q^5)\bigg\|.$$
Plugging formula \eqref{PRIMA} into the one above, we obtain that
$$\|\gamma(s_{j+1})-\gamma(s_{j-1})\| =$$
$$\|\gamma'(s_j)\| \left[\frac{2}{q}-\frac{1}{30q^3}\left(2\int_0^1 k(s)ds-k\left(\frac{j}{q}\right)-k\left(\frac{j+1}{q}\right)\right)-\frac{1}{3q^3}\frac{\|\gamma'''(s_j)\|}{\|\gamma'(s_j)\|}-\frac{1}{60q^4}\left(k'\left(\frac{j}{q}\right)+k'\left(\frac{j+1}{q}\right)\right)+O\left(\frac{1}{q^5}\right)\right] =$$
$$\rho(t(s_j))^\frac{1}{3} \left[\frac{2}{q}-\frac{1}{30q^3}\left(2\int_0^1 k(s)ds-k\left(\frac{j}{q}\right)-k\left(\frac{j+1}{q}\right)\right)-\frac{1}{3q^3}k(s_j)-\frac{1}{60q^4}\left(k'\left(\frac{j}{q}\right)+k'\left(\frac{j+1}{q}\right)\right)+O\left(\frac{1}{q^5}\right)\right].$$
\noindent By Taylor expansions, the order $4$ term cancels out and we finally get
$$\|\gamma(s_{j+1})-\gamma(s_{j-1})\|=\rho(t(s_j))^\frac{1}{3}\left[\frac{2}{q} + \frac{1}{15q^3} \left( k\left(\frac{j}{q}\right) - \int_0^1 k(s)ds \right) -\frac{1}{3q^3}k(s_j)+O\left(\frac{1}{q^5}\right)\right],$$
which is the desired result.
\end{proof}
\begin{corollary}\label{beta} Let $(\Omega_\tau)_{|\tau|\leq1}\subset\mathcal{D}^r$ be a normalized family of domains in $\mathcal{AS}^r_\delta$, parametrized by $\gamma$ and $\delta$-close to the circle. For $q \ge 3$, let $\mathbf{s} = \{ s_j \}_{j = 0}^q$ be the affine parameters corresponding to ordered vertices of a periodic trajectory for the symplectic billiard for the domain $\Omega_0$, with rotation number $1/q$. Then
$$\partial_\tau \omega_q(\tau,\mathbf{s})\mid_{\tau=0}
=\sum_{j=0}^{q-1}n_\gamma\left(0,s_j\right)\rho(t(s_j))^\frac{1}{3}\left[\frac{2}{q} -\frac{(2\pi)^2}{3q^3} + \frac{\beta(j/q)}{q^3} +O\left(\frac{\delta}{q^5}\right)\right],$$
where $\rho$ denotes the (usual) ray of curvature of the domain $\Omega_0$ and
$$\beta(j/q) = \frac{1}{15} \left[k\left(\frac{j}{q}\right)- \int_0^1 k(s)ds\right] + \frac{1}{3} \left( \left(2\pi\right)^2-k\left(\frac{j}{q}\right)\right).$$
\end{corollary}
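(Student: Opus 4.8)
The plan is to substitute the previously established value of the affine curvature of the circle into the expansion \eqref{generale} of the preceding lemma, since the domains in question are $\delta$-close to the circle. Recall from the affine differential geometry preliminaries that the affine curvature of the circle of unitary affine perimeter equals $(2\pi)^2$. Thus $\delta$-closeness to the circle means precisely that the affine curvature $k(s)$ of $\Omega_0$ is close to the constant $(2\pi)^2$, and more importantly that its derivatives $k'$, $k''$ are uniformly small (of order $\delta$). This is the mechanism by which the remainder term $O(1/q^5)$ in \eqref{generale}, which was controlled in the earlier Lemma by $C\|k'\|_{C^2}/q^5$, becomes $O(\delta/q^5)$.

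First I would take the bracketed expression from \eqref{generale}, namely
$$\frac{2}{q} + \frac{1}{15q^3}\left(k\left(\frac{j}{q}\right) - \int_0^1 k(s)ds\right) - \frac{1}{3q^3}k(s_j) + O\left(\frac{1}{q^5}\right),$$
and perform purely algebraic bookkeeping on the $1/q^3$ terms. The key move is to write $-\tfrac{1}{3q^3}k(s_j)$ by adding and subtracting the reference constant $(2\pi)^2$, i.e. $-\tfrac{1}{3q^3}k(s_j) = -\tfrac{(2\pi)^2}{3q^3} + \tfrac{1}{3q^3}\bigl((2\pi)^2 - k(s_j)\bigr)$. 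Collecting the remaining genuine $1/q^3$ contributions then yields exactly the claimed form with
$$\beta(j/q) = \frac{1}{15}\left[k\left(\frac{j}{q}\right) - \int_0^1 k(s)ds\right] + \frac{1}{3}\left((2\pi)^2 - k\left(\frac{j}{q}\right)\right).$$

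One point requiring care is that the Lemma's expansion is phrased in terms of $k(s_j)$ and $k(j/q)$ separately, whereas the corollary's $\beta(j/q)$ is written entirely in terms of $k(j/q)$. Here I would invoke formula \eqref{sj}, which gives $s_j = j/q + O(1/q^2)$ with a constant controlled by $\delta$; consequently $k(s_j) = k(j/q) + O(\delta/q^2)$, and since this correction multiplies a $1/q^3$ coefficient it contributes only at order $\delta/q^5$, which is absorbed into the stated remainder. The main (and only real) obstacle is therefore ensuring that every replacement of $k(s_j)$ by $k(j/q)$, and every bound on the tail, genuinely produces a remainder of order $\delta/q^5$ rather than merely $1/q^5$; this follows from tracking that all error constants in the earlier Lemma are proportional to $\|k'\|_{C^2}$, which is $O(\delta)$ under the closeness hypothesis of Definition \ref{dist cerchio}. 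The remainder of the argument is routine substitution and collection of terms.
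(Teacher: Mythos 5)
Your proposal is correct and follows essentially the same route as the paper's proof: using the $\delta$-closeness hypothesis to upgrade the remainder of \eqref{generale} to $O(\delta/q^5)$, substituting $k(s_j) = k(j/q) + O(\delta/q^2)$ via \eqref{sj}, and adding and subtracting $\frac{(2\pi)^2}{3q^3}$ to isolate $\beta(j/q)$. The only difference is that you spell out the bookkeeping in more detail than the paper does, which is harmless.
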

\begin{proof}
Notice that, under the hypothesis of $\delta$-closeness to the circle, the reminder of expansion (\ref{generale}) is $O\left( \frac{\delta}{q^5} \right)$. Then it is sufficient to substitute expansion (coming from (\ref{sj})):
$$k(s_j) = k\left(\frac{j}{q}\right) + O \left( \frac{\delta}{q^2}\right)$$
in (\ref{generale}) and sum and subtract $\frac{(2\pi)^2}{3q^3}$ (remind that $(2\pi)^2$ is the affine curvature of the circle of unitary affine perimeter).  
\end{proof}

\subsection{Proof of Theorem \ref{THM3}}
Consider $(\Omega_\tau)_{|\tau|\leq1}\subset\mathcal{AS}^r_\delta$ a family of normalized area-isospectral axially symmetric domains. For $q \ge 3$, let $\Bar{\mathbf{s}} = \{ s_j \}_{j = 0}^q$ be the affine parameters corresponding to the ordered vertices of the maximizing axially symmetric $q$-periodic orbit for the symplectic billiard in $\Omega_0$, as constructed in Lemma \ref{max axially}. By Proposition \ref{subdiff}, $\partial_\tau \omega_q(\tau,\Bar{\mathbf{s}})\mid_{\tau=0}=0$. This means that, as a consequence of Corollary \ref{beta}, 
$$\sum_{j=0}^{q-1}n_\gamma\left(0,s_j\right)\rho(t(s_j))^\frac{1}{3}\left[\frac{2}{q}-\frac{(2\pi)^2}{3q^3}+\frac{\beta(j/q)}{q^3}+O\left(\frac{\delta}{q^5}\right)\right] = 0.$$ 
Since, by hypothesis, $\rho(t(s))>0$ for every $s\in\mathbb{T}$, we indicate 
$$ u(s) = n_\gamma\left(0,s\right)\rho(t(s))^\frac{1}{3}$$ 
and investigate on
\begin{equation*}
\sum_{j=0}^{q-1}u(s_j)\left[\frac{2}{q}-\frac{(2\pi)^2}{3q^3}+\frac{\beta(j/q)}{q^3}+O\left(\frac{\delta}{q^5}\right)\right] = 0.
\end{equation*}
Since $u(s)$ is even and --by isospectral hypothesis-- with zero-average, we substitute its Fourier expansion in the basis $(\cos{(2\pi ks)})_{k\ge1}$. We get
     \begin{equation}\label{serie}
          \sum_{j=0}^{q-1}\sum_{k\geq1}\hat{u}_k\cos\left(2\pi ks_j\right)\left[\frac{2}{q}-\frac{(2\pi)^2}{3q^3}+\frac{\beta(j/q)}{q^3}+O\left(\frac{\delta}{q^5}\right)\right] = 0.
     \end{equation}         
Formula \eqref{sj} in the closeness to the circle assumption reads 
\begin{equation} \label{ultima fatica}
s_j=\frac{j}{q}+\frac{\alpha(j/q)}{q^2}+O\left(\frac{\delta}{q^4}\right), \qquad  
    \alpha(j/q):=\frac{1}{30}\int_0^\frac{j}{q}k(s)ds-\frac{j}{30q}\int_0^1k(s)ds. 
\end{equation}
We remark that, in the axially symmetric case, the function $\alpha$ is odd and of class $C^r$ and the function $\beta$ of Corollary \ref{beta} is even and of class $C^{r-1}$. \\
Substituting in (\ref{serie}) the above expansion of $s_j$, we obtain (up to rename $\beta$):
     \begin{equation*}
 \frac{2}{q} \sum_{j=0}^{q-1}\sum_{k\geq1}\hat{u}_k\left[\cos\left(2\pi k\left(\frac{j}{q}+\frac{\alpha(j/q)}{q^2}+O\left(\frac{\delta}{q^4}\right)\right)\right)\left(1-\frac{2\pi^2}{3q^2}+\frac{\beta(j/q)}{q^2}+O\left(\frac{\delta}{q^4}\right)\right)\right] = 0.
\end{equation*} 
Recall that, from the $\delta$--closeness of the domain to the circle (see the second formula in (\ref{ultima fatica})) $\|\alpha\|_{C^2}\leq\delta$. We now use Taylor expansion for the term involving the cosine:
\begin{equation*}
\cos\left(2\pi k\left(\frac{j}{q}+\frac{\alpha(j/q)}{q^2}+O\left(\frac{\delta}{q^4}\right)\right)\right)=\cos\left(\frac{2\pi kj}{q}\right)-2\pi k\sin{\left(\frac{2\pi jk}{q}\right)}\frac{\alpha(j/q)}{q^2}+O\left(\frac{\delta k}{q^4}\right)+O\left(\frac{\delta^2 k^2}{q^4}\right),  
 \end{equation*} 
and the Fourier expansions of $\alpha$ and $\beta$. We then obtain:
 \begin{equation*}
 \begin{split}
     &\sum_{k\geq1}\hat{u}_k \left[q\cdot\delta_{q|k}\left(1-\frac{2\pi^2}{3q^2}\right)+O\left(\frac{\delta}{q^3}\right)+ O\left(\frac{\delta^2k^2}{q^3}\right)+ O\left(\frac{\delta k}{q^3}\right)\right]+ \\ 
     & + \frac{1}{q^2}\hat{u}_k\sum_{j=0}^{q-1}\sum_{p\in\mathbb{Z}}-2\pi i k\left(\exp{\left(\frac{2\pi ijk}{q}\right)}-\exp{\left(-\frac{2\pi ijk}{q}\right)}\right)\alpha_p\exp{\left(\frac{2\pi ipj}{q}\right)} + \\ 
     & + \left(\exp{\left(\frac{2\pi ijk}{q}\right)}+\exp{\left(-\frac{2\pi ijk}{q}\right)}\right)\beta_p\exp{\left(\frac{2\pi ipj}{q}\right)} = \\ 
& = \sum_{k\geq1}\hat{u}_k\left[q\cdot\delta_{q|k}\left(1-\frac{2\pi^2}{3q^2}\right)+ O\left(\frac{\delta^2k^2}{q^3}\right)+ O\left(\frac{\delta k}{q^3}\right)\right] +\hat{u}_k\frac{1}{q}\sum_{s\in\mathbb{Z}}\left[\beta_{sq-k}+\beta_{sq+k}+2\pi ik(\alpha_{sq+k}-\alpha_{sq-k})\right] = 0.
\end{split}
\end{equation*}
Since $\alpha_p=-\alpha_{-p}$ and  $\beta_p=\beta_{-p}$, we finally get:
\begin{equation*} \label{ordine 4 e 2}
\sum_{k\geq1}\hat{u}_k\left[\left(1-\frac{2\pi^2}{3q^2}+\frac{\beta_0}{q^2}\right)\delta_{q|k}+\frac{2}{q^2}\sum_{\substack{s\in\mathbb{Z}\\ sq\neq k}}\left(\beta_{sq-k}-2\pi ik\alpha_{sq-k}
\right)+ O\left(\frac{\delta^2k^2}{q^4}\right)+ O\left(\frac{\delta k}{q^4}\right)\right] = 0,
\end{equation*}
or, equivalently (for all $q\geq3$):
\begin{equation} \label{ordine}
\sum_{k\geq1}\hat{u}_k\left[\left(1-\frac{2\pi^2}{3q^2}+\frac{\beta_0}{q^2}\right)\delta_{q|k}+\frac{2}{q^2}\sum_{\substack{0 \ne s\in\mathbb{Z}\\ sq\neq k}}\left(\beta_{sq-k}-2\pi ik\alpha_{sq-k}
\right)+ O\left(\frac{\delta^2k^2}{q^4}\right)+ O\left(\frac{\delta k}{q^4}\right)\right] = -\frac{2}{q^2} \sum_{k \ge 1} \hat{u}_k (\beta_k + 2\pi i k \alpha_k).
\end{equation}
\indent Since we prove that the right-hand side of equality (\ref{ordine}) is $0$ (we refer to the discussion at the end of the proof of the next theorem), Theorem \ref{THM3} becomes a straightforward consequence of the next lemma, whose proof takes up most of the section.
\begin{lemma} \label{liposuzione} If $u(s) = n_\gamma\left(0,s\right)\rho(t(s))^\frac{1}{3}$ solves 
\begin{equation} \label{questa}
\sum_{k\geq1}\hat{u}_k\left[\left(1-\frac{2\pi^2}{3q^2}+\frac{\beta_0}{q^2}\right)\delta_{q|k}+\frac{2}{q^2}\sum_{\substack{0 \ne s\in\mathbb{Z}\\ sq\neq k}}\left(\beta_{sq-k}-2\pi ik\alpha_{sq-k}
\right)+ O\left(\frac{\delta^2k^2}{q^4}\right)+ O\left(\frac{\delta k}{q^4}\right)\right] = 0
\end{equation}
for all $q \ge 3$, then $n_{\gamma} \equiv 0$.
\end{lemma}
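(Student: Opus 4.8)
The plan is to read equation~(\ref{questa}), over the range $q\ge 3$, as an infinite linear system on the Fourier coefficients $(\hat u_k)_{k\ge 1}$ and to show that, for $\delta$ small, the normalization and the regularity of $u$ force $\hat u\equiv 0$. First I would isolate the leading structure: the only term of size $O(1)$ in $q$ is the divisibility term, whose coefficient $A(q):=1-\tfrac{2\pi^2}{3q^2}+\tfrac{\beta_0}{q^2}$ stays bounded away from $0$, while every other contribution carries a prefactor $q^{-2}$ or $q^{-4}$. Setting $S(Q):=\sum_{m\ge 1}\hat u_{mQ}$, the system becomes $A(q)\,S(q)=\Phi_q(\hat u)$ for $q\ge 3$, where $\Phi_q$ gathers the off-diagonal couplings $\tfrac{2}{q^2}\sum_s(\beta_{sq-k}-2\pi i k\,\alpha_{sq-k})$ together with the $O(\delta k/q^4)$ and $O(\delta^2 k^2/q^4)$ remainders. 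The two ``missing'' equations $q=1,2$ are supplied exactly by the normalization: since $u=n_\gamma(0,\cdot)\,\rho^{1/3}$ with $\rho^{1/3}>0$ and $n_\gamma(0,0)=n_\gamma(0,1/2)=0$, evaluating the cosine expansion at $s=0$ and $s=1/2$ gives $S(1)=\sum_{k\ge1}\hat u_k=0$ and $S(2)=\sum_{k\ge1}\hat u_{2k}=0$.

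Next I would invert the leading operator by M\"obius inversion, $\hat u_q=\sum_{m\ge1}\mu(m)\,S(mq)$, which reconstructs every coefficient from the values $S(Q)$: for $Q\ge 3$ one substitutes $S(Q)=\Phi_Q(\hat u)/A(Q)$, whereas the arguments $Q=1,2$ occurring in the sums for $q=1,2$ contribute $0$. This exhibits $\hat u$ as a fixed point $\hat u=\mathcal{K}\hat u$ built from the small functionals $\Phi_Q$. The goal is then a contraction estimate $\|\mathcal{K}\|\le C\delta<1$ in the weighted space $\|\hat u\|_\sigma:=\sup_{k\ge1}k^\sigma|\hat u_k|$ (taking, say, $\sigma=4$). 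A priori finiteness $\|\hat u\|_\sigma<\infty$ follows from regularity: with $r=6$ the boundary is $C^{r+1}$ and $n_\gamma(0,\cdot)$ is $C^{r-1}$, so $u\in C^5$ and $\hat u_k=O(k^{-5})$. The decisive input is that $\delta$-closeness to the circle makes the affine curvature nearly constant, $k(s)-(2\pi)^2=O(\delta)$, whence the nonconstant Fourier coefficients of $\alpha$ and $\beta$ (built from $k$ and its primitive through~(\ref{sj}) and Corollary~\ref{beta}) satisfy $|\alpha_p|,|\beta_p|=O(\delta\,|p|^{-r})$ for $p\ne 0$. Writing $k=sq-p$ and summing over $s$, the convolution sums are controlled by an anti-correlation between size and resonance: when $|p|$ is small (so $\alpha_p,\beta_p$ are of full size $O(\delta)$) the nearest admissible $sq$ lies at distance $\gtrsim q$, while a small denominator $sq-k$ forces $|p|\gtrsim q$ and hence $|\alpha_p|,|\beta_p|=O(\delta q^{-r})$. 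This should yield $|\Phi_q(\hat u)|\le C\delta\,q^{-4}\|\hat u\|_\sigma$, and therefore $k^\sigma|\hat u_k|\le C\delta\,\|\hat u\|_\sigma$ after inversion; choosing $\delta$ small forces $\|\hat u\|_\sigma=0$, i.e.\ $u\equiv 0$ and so $n_\gamma(0,\cdot)\equiv 0$. Running the identical computation at every $\tau$, with the maximizing orbit of $\Omega_\tau$ from Lemma~\ref{max axially}, upgrades this to $n_\gamma\equiv 0$.

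The hard part will be precisely this contraction estimate. The perturbation $\Phi_q$ couples all modes simultaneously, and a naive bound gives only $\Phi_q=O(\delta/q^2)\|\hat u\|_\sigma$, which is too weak to close in a single weighted norm (it would produce coefficients decaying only like $q^{-2}$, inconsistent with the weight $\sigma=4$). Making the argument work requires the sharp convolution bookkeeping sketched above, exploiting \emph{simultaneously} the smoothness decay of $\hat u_k$, the decay together with the $\delta$-smallness of $\alpha_p,\beta_p$, and the resonance structure of the denominators $sq-k$, supported by the precise expansions~(\ref{PRIMA}) and~(\ref{sj}). I would expect this estimate, rather than the algebraic M\"obius step, to absorb most of the technical work.
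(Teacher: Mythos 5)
Your proposal is correct in substance and follows the same overall strategy as the paper: treat \eqref{questa} as a perturbation of the diagonal system $A(q)\,S(q)=\Phi_q(\hat u)$ with $S(Q):=\sum_{m\geq 1}\hat u_{mQ}$, bound the off-diagonal couplings in the weighted norm $\|u\|_4=\sup_k k^4|\hat u_k|$ using the decay $|\hat u_k|\le \|u\|_4 k^{-4}$ and $|\alpha_p|,|\beta_p|\le C\delta |p|^{-4}$, recover the modes $k=1,2$ from the normalization identities $\sum_{k}\hat u_k=0$ and $\sum_k \hat u_{2k}=0$, and close by a contraction in $\|\cdot\|_4$. The one genuine structural difference is your M\"obius inversion $\hat u_q=\sum_{m\geq 1}\mu(m)S(mq)$: the paper instead keeps the tail $\sum_{k\geq 2}\hat u_{kq}$ on the right-hand side and bounds it directly by $(\zeta(4)-1)\|u\|_4\,q^{-4}$, so its contraction constant has the form $C(1+\delta)$ with $C$ a small \emph{absolute} constant (driven by $\zeta(4)-1\approx 0.08$), not $O(\delta)$ as you claim for $\|\mathcal{K}\|$; your M\"obius route, once the interchange of summations is justified by absolute convergence (available since $u\in C^{r-1}$), does eliminate that tail and yields a genuinely $O(\delta)$ contraction, which is cleaner, while the paper's route is more elementary. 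The ``hard part'' you flag --- the convolution sum $\frac{1}{q^{2}}\sum_{k}\sum_{s}|\hat u_k|\,k\,|\alpha_{sq-k}|$ --- is carried out in the paper by exactly the dichotomy you sketch: splitting at $1\le k<\tfrac{sq}{2}$, $\tfrac{sq}{2}\le k<sq$, $k>sq$, so that either $|sq-k|\gtrsim sq$ (and the $\alpha,\beta$ coefficients decay) or $k\gtrsim sq$ (and $\hat u_k$ decays), giving $O(\delta\|u\|_4\,q^{-5})$. Two small corrections: your sentence ``a small denominator $sq-k$ forces $|p|\gtrsim q$'' should read $k\gtrsim q$, since $p=sq-k$ small is precisely the resonant case and it is the largeness of $k$, not of $p$, that saves it; and the decay you can actually use for $\alpha_p,\beta_p$ is $O(\delta|p|^{-t})$ for $t\le r-1$, which suffices since only $t=4$ is needed.
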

\begin{proof}
For all $q \ge 3$, (\ref{questa}) equals to
$$-\hat{u}_q\left(1-\frac{2\pi^2}{3q^2}+\frac{\beta_0}{q^2}\right)=\sum_{k\geq2}\hat{u}_{kq}\left(1-\frac{2\pi^2}{3q^2}+\frac{\beta_0}{q^2}\right)+  \sum_{k\geq1}\hat{u}_k\left[ \frac{2}{q^2} \sum_{\substack{0 \ne s\in\mathbb{Z}\\ sq\neq k}}\left(\beta_{sq-k}-2\pi ik\alpha_{sq-k}\right)+ O\left(\frac{\delta^2k^2}{q^4}\right)+ O\left(\frac{\delta k}{q^4}\right)\right].$$
Notice that
$$0.26 < \left(1-\frac{2\pi^2}{3q^2}+\frac{\beta_0}{q^2} \right) < 1 \qquad \forall q \ge 3$$
and take the absolute value in the last equality. We then obtain:
 \begin{equation*}\begin{split}
     & |\hat{u}_q|\left(1-\frac{2\pi^2}{3q^2}+\frac{\beta_0}{q^2}\right)\leq \\
     & \le \sum_{k\geq2}|\hat{u}_{kq}|\left(1-\frac{2\pi^2}{3q^2}+\frac{\beta_0}{q^2}\right) + \frac{2}{q^2} \sum_{k\geq1}|\hat{u}_k| \sum_{\substack{0 \ne s\in\mathbb{Z} \\ sq\neq k}}\left|\beta_{sq-k}-2\pi ik\alpha_{sq-k}\right| + \sum_{k\geq1}|\hat{u}_k|\left|O\left(\frac{\delta^2k^2}{q^4}\right)+ O\left(\frac{\delta k}{q^4}\right)\right|.
     \end{split}
 \end{equation*} 
In the sequel, for $f \in C^{r-1}(\mathbb{T})$ with zero average, fixed an integer $1\leq t\leq r-1$, we indicate with $\|f\|_t$ the norm
\begin{equation} \label{norm4}
\|f\|_t:=\max_{k \in \mathbb{Z}} |\hat{f}_k\cdot k^{t}| \Rightarrow |\hat{f}_k| \le \frac{\|f\|_t}{|k|^{t}}\quad \forall k\neq0,
\end{equation}
where $\hat{f_k}$ denotes the $k$-th Fourier coefficient of $f$.\\
\noindent Closeness to the circle assumption implies
$$\|\alpha\|_t\leq A\delta, \qquad\|\beta\|_t\leq B\delta$$
for some uniform constants $A, B > 0$. Recalling that $u\in C^{r-1}$, and both functions $\alpha$ and $\beta$ are at least $C^{r-1}$, for $1\leq t\leq r-1$ we get  
\begin{equation}\label{norm}\begin{split}
    &|\hat{u}_q|\left(1-\frac{2\pi^2}{3q^2}+\frac{\beta_0}{q^2}\right)\leq \\
    & \le \sum_{k\geq2}\frac{\|u\|_t}{(kq)^{t}}\left(1-\frac{2\pi^2}{3q^2}+\frac{\beta_0}{q^2}\right)+ \frac{2}{q^2} \sum_{k\geq1}\frac{\|u\|_t}{k^{t}}\sum_{\substack{0 \ne s\in\mathbb{Z} \\ sq\neq k}}\left|\beta_{sq-k}-2\pi ik\alpha_{sq-k}\right| + \sum_{k\geq1}\frac{\|u\|_t}{k^{t}}\left|O\left(\frac{\delta^2k^2}{q^4}\right)+ O\left(\frac{\delta k}{q^4}\right)\right|\\
    &\leq\sum_{k\geq2}\frac{\|u\|_t}{(kq)^{t}}\left(1-\frac{2\pi^2}{3q^2}+\frac{\beta_0}{q^2}\right)+ \frac{2}{q^2} \sum_{k\geq1}\frac{\|u\|_t}{k^{t}}\sum_{\substack{0 \ne s\in\mathbb{Z}\\ sq\neq k}}\left(\frac{B\delta}{|sq-k|^{t}}+\frac{A\delta k}{|sq-k|^{t}}\right) + \sum_{k\geq1}\frac{\|u\|_t}{k^{t}}\left|O\left(\frac{\delta^2k^2}{q^4}\right)+ O\left(\frac{\delta k}{q^4}\right)\right|\\
    &\leq\sum_{k\geq2}\frac{\|u\|_t}{(kq)^{t}}\left(1-\frac{2\pi^2}{3q^2}+\frac{\beta_0}{q^2}\right)+ \frac{\bar{C} \|u\|_t }{q^2} \sum_{k\geq1}\sum_{\substack{0 \ne s\in\mathbb{Z}\\ sq\neq k}}\frac{\delta k}{|sq-k|^{t} {k^{t}}} + \sum_{k\geq1}\frac{\|u\|_t}{k^{t}}\left|O\left(\frac{\delta^2k^2}{q^4}\right)+ O\left(\frac{\delta k}{q^4}\right)\right|.
\end{split}
\end{equation}
where $\bar{C}=4\max\lbrace A,B\rbrace> 0$ .  Let us now look at each summand (the aim is showing that $\hat{u}_q = 0$ for every $q\geq3$).
\begin{itemize}
    \item[$(a)$] The first term satisfies
    \begin{equation*}
        \sum_{k\geq2}\frac{\|u\|_t}{(kq)^{t}}\left(1-\frac{2\pi^2}{3q^2}+\frac{\beta_0}{q^2}\right)\leq\frac{\|u\|_t}{q^{t}}(\zeta(t)-1)\left(1-\frac{2\pi^2}{3q^2}+\frac{\beta_0}{q^2}\right).
    \end{equation*}
    Notice that to get the convergence of the series we need to ask $t\geq2$. 
    \item[$(b)$] The third term satisfies
    \begin{equation*}
        \sum_{k\geq1}\frac{\|u\|_t}{k^{t}}\left|O\left(\frac{\delta^2k^2}{q^4}\right)+ O\left(\frac{\delta k}{q^4}\right)\right|\leq\frac{\|u\|_t}{q^{4}}(\delta^2K_1\zeta(t-2)+\delta K_2\zeta(t-1))
    \end{equation*}
    for some constants $K_1,K_2 > 0$. For this term, in order to assure the convergence of the series, we need to ask $t\geq4$.
    \item[$(c)$] The second summand requires more work, and we use arguments analogous to the ones in \cite{DS}[Proof of Lemma 5.3]. Up to $\frac{\bar{C} \delta \| u \|_t}{q^2}$, the term to study is
    \begin{equation*} \begin{split}
     &\sum_{\substack{0 \ne s\in\mathbb{Z}\\ sq\neq k}} \sum_{k\geq1} \frac{1}{k^{t-1} |sq-k|^{t}} \le 2 \sum_{\substack{s \ge 1 \\ sq\neq k}} \sum_{k\geq1} \frac{1}{k^{t-1} |sq-k|^{t}} = \\
     & = 2 \sum_{s \ge 1}  \left( \sum_{1 \le k < \frac{sq}{2}} \frac{1}{k^{t-1} (sq-k)^{t}} + \sum_{\frac{sq}{2} \le k < sq} \frac{1}{k^{t-1} (sq-k)^{t}} + \sum_{k >sq} \frac{1}{k^{t-1} |sq-k|^{t}}\right) \le \\
     & \le 2 \sum_{s \ge 1} \left(  \sum_{1 \le k < \frac{sq}{2}} \frac{2^t}{k^{t-1} s^t q^t} + \sum_{\frac{sq}{2} \le k < sq} \frac{2^{t-1}}{s^{t-1}q^{t-1}(sq-k)^t} + \sum_{k > sq} \frac{1}{s^{t-1}q^{t-1}|sq-k|^{t}} 
     \right) \le \\
     & \le 2 \left( \frac{2^t \zeta(t-1)\zeta(t)}{q^t} + \frac{2^{t-1} \zeta(t-1)\zeta(t)}{q^{t-1}} + \frac{\zeta(t-1)\zeta(t)}{q^{t-1}} \right) = \frac{2 \zeta(t-1) \zeta(t)}{q^{t-1}} \left( \frac{2^t}{q} + 2^{t-1} + 1 \right).
    \end{split} 
    \end{equation*}
\end{itemize}
Chosing $t = 4$ and summing up the three estimates obtained in $(a)$, $(b)$ and $(c)$, we obtain:
\begin{equation*} 
    |\hat{u}_q|\left(1-\frac{2\pi^2}{3q^2}+\frac{\beta_0}{q^2}\right) \leq \frac{ \|u\|_4}{q^{4}}\left[ (\zeta(4)-1)\left(1-\frac{2\pi^2}{3q^2}+\frac{\beta_0}{q^2}\right) + \frac{2 \bar{C} \delta \zeta(3)\zeta(4)}{q} \left(\frac{16}{q} + 9 \right) + \delta^2 K_1 \zeta(2) + \delta K_2 \zeta(3)\right].
\end{equation*}
Since $\left(1-\frac{2\pi^2}{3q^2}+\frac{\beta_0}{q^2} \right) > 0.26$ for every $q \ge 3$, we finally get
\begin{equation*} 
|\hat{u}_q| \le \frac{ \|u\|_4}{q^{4}} \left( (\zeta(4) - 1) + \mathfrak{C} \delta \right) \qquad \forall q \ge 3
\end{equation*}
where
\begin{equation} \label{FRAK}
\mathfrak{C} := \frac{10 \bar{C}\zeta(3)\zeta(4) + K_1\zeta(2) + K_2\zeta(3)}{0.26}
\end{equation}
does not depend on $q \ge3$ and $\delta > 0$. Consequently, for $\delta > 0$ sufficiently small, it holds
\begin{equation*} 
|\hat{u}_q| < D \frac{ \|u\|_4}{q^{4}} \qquad \forall q \ge 3,
\end{equation*}
for a constant $D < 1$. Note that --by the normalization conditions-- we also have
$$ 
u(0)=\sum_{k=1}^\infty \hat{u}_k=0 \qquad \text{and} \qquad
u(0)+u\left(\frac{1}{2}\right)=2\sum_{k=1}^\infty \hat{u}_{2k} =0.
$$
From these it follows that also $|\hat{u}_2| < \frac{ \|u\|_4}{16} $ and $|\hat{u}_1| < \|u\|_4$. At the end, we conclude that $\|u\|_4=0$ and therefore $n_\gamma \equiv0$.
\end{proof}
\indent We are now ready to conclude the proof of Theorem \ref{THM3}. From the explicit estimates contained in the proof of the previous lemma, it follows that all the terms involved in the linear system (\ref{questa}) are $O\left(\frac{1}{q^4}\right)$. Differently, the term in the right member of (\ref{ordine}) is $O\left(\frac{1}{q^2}\right)$. Consequently, since the left members of (\ref{ordine}) and (\ref{questa}) are exactly the same, if $ u(s) =n_\gamma\left(0,s\right)\rho(t(s))^\frac{1}{3}$ solves (\ref{ordine}), then necessarily $\sum_{k \ge 1} \hat{u}_k (\beta_k + 2\pi i k \alpha_k) = 0$ and therefore (as a consequence of Lemma \ref{liposuzione}) $ u \equiv 0$. This means that $n_\gamma \equiv0$, which is the desired result. \\

\end{document}